\date{\today}
\newcommand*{\mailto}[1]{\href{mailto:#1}{\nolinkurl{#1}}}
\newcommand{\arxiv}[1]{\href{http://arxiv.org/abs/#1}{arXiv:#1}}
\newcommand{\bbC}{{\mathbb{C}}}
\newcommand{\bbN}{{\mathbb{N}}}
\newcommand{\bbR}{{\mathbb{R}}}
\newcommand{\cB}{{\mathcal B}}
\newcommand{\cH}{{\mathcal H}}
\newcommand{\cJ}{{\mathcal J}}
\newcommand{\cN}{{\mathcal N}}
\newcommand{\beq}{\begin{equation}}
\newcommand{\enq}{\end{equation}}
\DeclareMathOperator{\tr}{tr}
\newcommand{\dott}{\,\cdot\,}
\newcommand{\no}{\notag}
\newcommand{\lb}{\label}
\newcommand{\f}{\frac}
\newcommand{\wti}{\widetilde}
\newcommand{\bi}{\bibitem}
\renewcommand{\ge}{\geqslant}
\newcommand{\fatcdot}{\textbf{\Large.}}
\let\geq\geqslant
\let\leq\leqslant
\def\theequation{\@arabic\c@equation}
\numberwithin{equation}{section}
\newtheorem{theorem}{Theorem}[section]
\newtheorem{lemma}[theorem]{Lemma}
\newtheorem{corollary}[theorem]{Corollary}
\newtheorem{hypothesis}[theorem]{Hypothesis}
\newtheorem{example}[theorem]{Example}
\theoremstyle{remark}
\newtheorem{remark}[theorem]{Remark}
\begin{document}
\title[On Weighted Hardy-Type Inequalities]{On Weighted Hardy-Type Inequalities} 

\author[C.\ Y.\ Chuah et al.]{Chian Yeong Chuah}
\address{Department of Mathematics, 
Baylor University, One Bear Place \#97328,
Waco, TX 76798-7328, USA}
\email{\mailto{Chian\_Chuah@baylor.edu}}

\author[]{Fritz Gesztesy}
\address{Department of Mathematics, 
Baylor University, One Bear Place \#97328,
Waco, TX 76798-7328, USA}
\email{\mailto{Fritz\_Gesztesy@baylor.edu}}
\urladdr{\url{https://www.baylor.edu/math/index.php?id=935340}}

\author[]{Lance L. Littlejohn}
\address{Department of Mathematics, 
Baylor University, One Bear Place \#97328,
Waco, TX 76798-7328, USA}
\email{\mailto{Lance\_Littlejohn@baylor.edu}}
\urladdr{\url{https://www.baylor.edu/math/index.php?id=53980}}

\author[]{Tao Mei}
\address{Department of Mathematics, 
Baylor University, One Bear Place \#97328,
Waco, TX 76798-7328, USA}
\email{\mailto{Tao\_Mei@baylor.edu}}
\urladdr{\url{https://www.baylor.edu/math/index.php?id=925779}}

\author[]{Isaac Michael}
\address{Department of Mathematics, 
Baylor University, One Bear Place \#97328,
Waco, TX 76798-7328, USA}
\email{\mailto{Isaac\_Michael@baylor.edu}}
\urladdr{\url{http://blogs.baylor.edu/isaac\_michael/}}

\author[]{Michael M.\ H.\ Pang}
\address{Department of Mathematics,
University of Missouri, Columbia, MO 65211, USA}
\email{\mailto{pangm@missouri.edu}}
\urladdr{\url{https://www.math.missouri.edu/people/pang}}

\date{\today}
\thanks{{\it Mathematical Inequalities $\&$~Applications} (to appear).}
\subjclass[2010]{Primary: 26D10, 34A40; Secondary: 35A23.}
\keywords{Weighted Hardy inequalities, Birman's inequalities, operator-valued inequalities.}

\begin{abstract}
We revisit weighted Hardy-type inequalities employing an elementary ad hoc approach that yields explicit constants. We also discuss the infinite sequence of power weighted Birman--Hardy--Rellich-type inequalities and derive an operator-valued version thereof. 
\end{abstract}

\maketitle


\section{Introduction} \lb{s1}

To put the results derived in this paper into some perspective, we very briefly recall some of the history of Hardy's celebrated inequality. We will exclusively focus on the continuous case even though Hardy originally started to investigate the discrete case (i.e., sums instead of integrals). 

Hardy's inequality, in its primordial version, is of the form
\begin{equation}
\int_0^{\infty} dx \, |f'(x)|^2 \geq 4^{-1} \int_0^{\infty} dx \, x^{-2} |f(x)|^2, \quad f \in C_0^{\infty}((0,\infty)),  \lb{1.1a}
\end{equation}
with the constant $4^{-1}$ being optimal and the inequality being a strict one for $f \neq 0$. (This extends to all $f \in AC([0,R])$ for all $R>0$, $f' \in L^2((0,\infty); dx)$, with $f(0_+) =0$, but we will not dwell on this improvement right now.) 
Hardy's work on his celebrated inequality started in 1915, \cite{Ha15}, see also 
\cite{Ha19}--\cite{Ha25}, and the historical comments in 
\cite[Chs.~1, 3, App.]{KMP07}. Soon afterwards, Hardy also proved a weighted Hardy inequality (with power weights) of the form (cf.\ \cite{Ha28}, \cite[Sect.~9.8]{HLP88}),
\begin{align}
\begin{split} 
\int_0^b dx \, x^{\alpha} |f'(x)|^p \geq \bigg(\f{|\alpha - p +1|}{p}\bigg)^p \int_0^{\infty} dx \, x^{\alpha - p} |f(x)|^p,& \\ 
p \in [1,\infty), \;  \alpha \in \bbR, \; f \in C_0^{\infty}((0,\infty)).& \lb{1.1b} 
\end{split} 
\end{align}
Again, the constant $(|\alpha - p +1|/p)^p$ is optimal and the inequality is strict for $f \neq 0$. 

Equation \eqref{1.1b} represents just the tip of an iceberg of weighted inequalities of Hardy-type. More generally, modern treatments of this subject are devoted to weighted inequalities of the form
\begin{equation}
\bigg(\int_a^b dx \, v(x) |f'(x)|^p\bigg)^{1/p} \geq C_{p,q} \bigg(\int_a^b dx \, w(x) |f(x)|^q\bigg)^{1/q}, \quad 
f \in C_0^{\infty} ((a,b)),     \lb{1.1c} 
\end{equation} 
for appropriate $a, b \in \bbR \cup \{\pm \infty\}$, $a < b$, $p, q \in [1,\infty) \cup \{\infty\}$, and appropriate weight functions $0 \leq v, w \in L^1_{loc}((a,b); dx)$. Again, this extends to certain optimal spaces for $f$, far beyond 
$f \in C_0^{\infty} ((a,b))$. We refer to \cite[Chs.~2--5]{KMP07}, \cite[Chs.~1,2]{KPS17}, \cite[Ch.~1]{OK90}, and the extensive literature cited therein. In particular, we mention the following integral versions of the two-weighted Hardy-type inequality \eqref{1.1c} (the former is sometimes referred to as the differential version),
\begin{align}
\begin{split} 
\bigg(\int_a^b dx \, v(x) |F(x)|^p\bigg)^{1/p} 
\geq C_{p,q} \bigg(\int_a^b dx \, w(x) \bigg|\int_a^x dx' \, F(x')\bigg|^q\bigg)^{1/q},& \\
F \in C_0^{\infty} ((a,b)),&     \lb{1.1d} 
\end{split} 
\end{align}  
and its companion (or ``dual'') version 
\begin{align}
\begin{split} 
\bigg(\int_a^b dx \, v(x) |F(x)|^p\bigg)^{1/p} 
\geq C_{p,q} \bigg(\int_a^b dx \, w(x) \bigg|\int_x^b dx' \, F(x')\bigg|^q\bigg)^{1/q},& \\
F \in C_0^{\infty} ((a,b)).&     \lb{1.1e} 
\end{split} 
\end{align}  
We note that many authors make the additional assumption $F \geq 0$ in \eqref{1.1d}, \eqref{1.1e}. 

Before describing the results obtained in this paper in some detail, we pause for a moment to introduce our notation: We start by briefly summarizing essentials on Bochner integrability and associated vector-valued $L^p$-spaces. 
Regarding details of the Bochner integral we refer, for instance, to \cite[p.\ 6--21]{ABHN01},
\cite[Ch.~1]{CM97}, \cite[p.\ 44--50]{DU77}, \cite[p.\ 71--86]{HP85}, \cite[Sect.~4.2]{Ku78}, 
\cite[Ch.\ III]{Mi78}, \cite[Sect.\ V.5]{Yo80}. 
In particular, if $p\ge 1$, $(a,b) \subseteq \bbR$ is a finite or infinite interval, 
$0 \leq w \in L^1_{loc}((a,b); dx)$ is a weight function, and $\cB$ a Banach space, 
the symbol $L^p((a,b);wdx;\cB)$ denotes the set of equivalence classes of strongly measurable $\cB$-valued functions which differ at most on sets of Lebesgue measure zero, such that 
$\|f(\cdot)\|_{\cB}^p \in L^1((a,b); wdx)$. The
corresponding norm in $L^p((a,b);wdx;\cB)$ is given by
\begin{equation}
\|f\|_{L^p((a,b);wdx;\cB)} = \bigg(\int_{(a,b)} w(x) dx \, \|f(x)\|_{\cB}^p\bigg)^{1/p}
\end{equation}
and $L^p((a,b);wdx;\cB)$ is a Banach space.
In the special case $\cB = \bbC$, we omit $\bbC$ and just write $L^p((a,b); w dx)$, respectively, 
$L^p_{loc}((a,b); w dx)$, as usual.  If $\cH$ is a separable Hilbert space, then so is 
$L^2((a,b);wdx;\cH)$ (see, e.g., \cite[Subsects.\ 4.3.1, 4.3.2]{BW83}, \cite[Sect.\ 7.1]{BS87}). 

One recalls that by a result of Pettis \cite{Pe38}, if $\cB$ is separable, weak
measurability of $\cB$-valued functions implies their strong measurability.

A map $f:[c,d] \to \cB$ (with $[c,d] \subset (a,b)$) is called \textit{absolutely continuous 
on $[c,d]$}, denoted by $f \in AC([c,d]; \cB)$, if 
\begin{equation}
f(x)= f(x_0) + \int_{x_0}^x dt \, g(t), \quad x_0, x \in [c,d], 
\end{equation}
for some $g \in L^1((c,d);dx;\cB)$. In particular, $f$ is then 
strongly differentiable a.e.\ on $(c,d)$ and
\begin{equation}
f'(x) = g(x) \, \text{ for a.e.\ $x \in (c,d)$}.
\end{equation}
Similarly, $f:[c,d] \to \cB$ is called \textit{locally absolutely continuous}, denoted by 
$f \in AC_{loc}([c,d]; \cB)$, if $f \in AC([c',d']; \cB)$ on any closed subinterval $[c',d'] \subset (c,d)$. 

For $p \in [1,\infty)$, its H\"older conjugate index $p'$ is given in a standard manner by 
$p' = p/(p-1) \in (1, \infty) \cup \{\infty\}$.  
 
If $\cH$ represents a complex, separable Hilbert space, then $\cB(\cH)$ denotes the Banach space (the $C^*$-algebra) of bounded, linear operators defined on all of $\cH$, and $\cB_p(\cH)$ denote the $\ell^p$-based 
Schatten--von Neumann trace ideals, $p \in [1,\infty)$, with ${\tr}_{\cH} (T)$ abbreviating the trace of a trace 
class operator $T \in \cB_1(\cH)$. 

Finally, we are in a position to briefly describe the principal result of our paper in Section \ref{s2}. Assume that $- \infty \leq a < b \leq \infty$, $p \in [1,\infty)$, and 
suppose that $0 \leq w_1 \in AC_{loc}((a,b))$, $0 \leq [- w_1']$ a.e.~on $(a,b)$, 
$0 \leq w_2 \in L^1_{loc}((a,b); dx)$, and $[- w_1']^{1-p} w_2^p \in L^1_{loc}((a,b); dx)$. If $F \in C_0((a,b); \cB)$, then we prove that 
\begin{align} 
\begin{split} 
& \int_a^b dx \, w_1(x)^p [- w_1'(x)]^{1-p} w_2(x)^p \|F(x)\|_{\cB}^p   \\
& \quad \geq p^{-p} \int_a^b dx \, [- w_1'(x)] \bigg(\int_a^x dx' \, w_2(x') \|F(x')\|_{\cB}\bigg)^p.   \lb{1.1} 
\end{split}
\end{align}
Moreover, we prove the companion result with $\int_a^x dx' \dots$ replaced by $\int_x^b dx' \dots$\,. As an 
important special case of \eqref{1.1} one recovers the classical form of the power weighted Hardy inequality 
\begin{align}
\begin{split} 
\int_0^b dx \, \, x^{\alpha} \, \|F(x)\|^p_{\cB} \geq 
\bigg(\f{|\alpha + 1 - p|}{p}\bigg)^p \int_0^b dx \, x^{\alpha -p} \bigg(\int_0^x dx' \,  \|F(x')\|_{\cB}\bigg)^p,&   
 \lb{1.2} \\
0 < b \leq \infty, \; p \in [1,\infty), \; \alpha < p - 1.&  
\end{split} 
\end{align} 
As alluded to earlier, the constant $[(|\alpha - p + 1|)/p]^p$ on the right-hand side of \eqref{1.2} is best possible, and equality holds if and only if $F = 0$~a.e.~on $(0,b)$. 
After describing appropriate iterations of \eqref{1.2} (again, including the companion results with $\int_a^x dx' \dots$ replaced by $\int_x^b dx' \dots$), we also recover as a special case the entire infinite sequence of the power weighted Birman--Hardy--Rellich-type inequalities (cf.\ \cite[p.~48]{Bi66}, \cite{GLMW18}, \cite[pp.~83--84]{Gl66}) at the end of Section \ref{s2}, namely, 
\begin{align}
\begin{split} 
\int_0^b dx \, x^{\alpha} |f^{(n)}(x)|^p \geq \f{\prod_{j=1}^k |\alpha- jp +1|^p}{p^{kp}}
\int_0^b dx \, x^{\alpha - kp} |f^{(n-k)}(x)|^p,& \\ 
0 < b \leq \infty, \; p \in [1,\infty), \; 1 \leq k \leq n, \; n \in \bbN, \; \alpha \in \bbR, \; f \in C_0^{\infty}((0,b)).&    \lb{1.3} 
\end{split} 
\end{align}

Replacing the restrictive hypothesis $F \in C_0((a,b); \cB)$ by the finiteness condition of the left-hand side in \eqref{1.1}, and a detailed discussion of best possible constants in these inequalities are the principal subjects 
of Section \ref{s3}. 

Finally, in Section \ref{s4} we consider extensions of \eqref{1.2} and of the 
infinite sequence of Birman--Hardy--Rellich-type inequalities to the operator-valued context, extending some results of Hansen \cite{Ha09}. More specifically, assuming $F : (0,b) \to \cB(\cH)$ is a weakly measurable map satisfying  
$\|F(\dott)\|_{\cB_p(\cH)} \in L^p((0,b); x^{\alpha} dx)$, we derive the inequality
\begin{align}
& {\tr}_{\cH}\bigg(\int_0^b dx \, \, x^{\alpha} \, |F(x)|^p\bigg) \geq 
\bigg(\f{|\alpha - p + 1|}{p}\bigg)^p {\tr}_{\cH}\bigg(\int_0^b dx \, x^{\alpha -p} 
\bigg|\int_0^x dx' \, F(x')\bigg|^p\bigg),    \no \\
& \hspace*{5.8cm} 0 < b \leq \infty, \; p \in [1,\infty), \; \alpha < p - 1.      \lb{1.4}
\end{align} 
Again, the constant $[(|\alpha - p + 1|)/p]^p$ on the right-hand side of 
\eqref{1.4} is best possible, and equality holds if and only if $F = 0$~a.e.~on $(0,b)$.

Moreover, for $p \in [1,2]$, we remove the trace in inequality \eqref{1.4} as follows: Suppose that 
$F : (0,\infty) \to \cB(\cH)$ is a weakly measurable map satisfying 
$F(\dott) \geq 0$~a.e.~on $(0,\infty)$, and 
$\int_0^{\infty} dx \, x^{\alpha} F(x)^p \in \cB(\cH)$, then we derive the operator-valued inequality 
\begin{align}
\begin{split}
\int_0^{\infty} dx \, \, x^{\alpha} \, F(x)^p \geq 
\bigg(\f{|\alpha - p + 1|}{p}\bigg)^p \int_0^{\infty} dx \, x^{\alpha -p} 
\bigg(\int_0^x dx' \, F(x')\bigg)^p,&    \lb{1.5} \\
p \in [1,2], \; \alpha < p - 1.&
\end{split} 
\end{align} 
Once again, 
the constant $[(|\alpha - p + 1|)/p]^p$ on the right-hand side of \eqref{1.5} is best possible, and equality holds if and only if $F = 0$~a.e.~on $(0,\infty)$. We also derive the corresponding companion results with $\int_a^x dx' \dots$ replaced by $\int_x^b dx' \dots$\,. 

We emphasize that \eqref{1.4} and \eqref{1.5} with $\alpha = 0$ (and hence $p > 1$) were  proved by Hansen in  \cite{Ha09}.

\section{Weighted Hardy-Type Inequalities Employing an Ad Hoc Approach} \lb{s2}

In this section we derive weighted Hardy inequalities employing an elementary ad hoc approach.

We begin by deriving a weighted Hardy inequality for $\cB$-valued functions and hence make the 
following assumptions.

\begin{hypothesis} \lb{h2.1} 
Let $- \infty \leq a < b \leq \infty$, $p \in [1,\infty)$, and $0 \leq w_2 \in L^1_{loc}((a,b); dx)$.  \\[1mm]
$(i)$ Suppose that $0 \leq w_1 \in AC_{loc}((a,b))$, $0 \leq [- w_1']$ a.e.~on $(a,b)$, 
$[- w_1']^{1-p} w_2^p \in L^1_{loc}((a,b); dx)$. \\[1mm] 
$(ii)$ Suppose that $0 \leq w_1 \in AC_{loc}((a,b))$, $0 \leq w_1'$ a.e.~on $(a,b)$, 
$[w_1']^{1-p} w_2^p \in L^1_{loc}((a,b); dx)$. 
\end{hypothesis}

The principal result of this section then reads as follows:

\begin{theorem} \lb{t2.2}
Let $p \in [1,\infty)$, and suppose that $F \in C_0((a,b); \cB)$.  \\[1mm] 
$(i)$ Assume Hypothesis \ref{h2.1}\,$(i)$, then
\begin{align} 
\begin{split} 
& \int_a^b dx \, w_1(x)^p [- w_1'(x)]^{1-p} w_2(x)^p \|F(x)\|_{\cB}^p   \\
& \quad \geq p^{-p} \int_a^b dx \, [- w_1'(x)] \bigg(\int_a^x dx' \, w_2(x') \|F(x')\|_{\cB}\bigg)^p.   \lb{2.5} 
\end{split}
\end{align}
$(ii)$ Assume Hypothesis \ref{h2.1}\,$(ii)$, then
\begin{align} 
\begin{split} 
& \int_a^b dx \, w_1(x)^p [w_1'(x)]^{1-p} w_2(x)^p \|F(x)\|_{\cB}^p    \\
& \quad \geq p^{-p} \int_a^b dx \, w_1'(x) \bigg(\int_x^b dx' \, w_2(x') \|F(x')\|_{\cB}\bigg)^p.    \lb{2.6} 
\end{split}
\end{align}
\end{theorem}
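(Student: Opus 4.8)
The plan is to prove part $(i)$ directly by one integration-by-parts step followed by a single application of Hölder's inequality, and then to obtain part $(ii)$ by the evident left--right symmetry. Throughout, write $G(x) = \int_a^x dx' \, w_2(x') \|F(x')\|_{\cB}$, so that $G$ is nonnegative, nondecreasing, locally absolutely continuous with $G'(x) = w_2(x)\|F(x)\|_{\cB}$ a.e., and --- crucially, since $F \in C_0((a,b);\cB)$ has compact support --- $G$ vanishes near $a$ and is constant near $b$. The right-hand side of \eqref{2.5} is then $p^{-p}\int_a^b dx\,[-w_1'(x)]\,G(x)^p$, and the object of the proof is to bound this integral, call it $R$, against the left-hand side $L = \int_a^b dx\, w_1^p[-w_1']^{1-p}w_2^p\|F\|_{\cB}^p$.

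First I would integrate by parts in $R$. Since $w_1 \geq 0$ is nonincreasing (because $-w_1' \geq 0$) and $G \geq 0$, and since $G \equiv 0$ near $a$ while $G$ is constant near $b$, the boundary contribution equals $-w_1(b^-)G(b^-)^p \leq 0$; discarding it yields $R \leq p\int_a^b dx\, w_1(x)\, G(x)^{p-1} w_2(x)\|F(x)\|_{\cB}$. The compact support of $F$ also guarantees $R < \infty$, so no divergent boundary terms arise. For $p = 1$ this last integral already equals $L$, and the claim $L \geq R$ follows at once; the remaining work concerns $p \in (1,\infty)$.

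Next I would factor the integrand of the bound just obtained as the product of $w_1 w_2\|F\|_{\cB}[-w_1']^{(1-p)/p}$ and $G^{p-1}[-w_1']^{(p-1)/p}$, and apply Hölder's inequality with conjugate exponents $p$ and $p'=p/(p-1)$. The point of this particular split is that the $p$-th power of the first factor is exactly the integrand of $L$, while the $p'$-th power of the second factor is exactly $[-w_1']G^p$, i.e.\ the integrand of $R$; the hypothesis $[-w_1']^{1-p}w_2^p \in L^1_{loc}((a,b);dx)$ is what makes the first factor $p$-integrable and controls the set where $-w_1'$ vanishes. Hölder then gives $R \leq p\,L^{1/p}R^{1/p'}$. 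Since $0 \leq R < \infty$, I would divide by $R^{1/p'}$ (the case $R=0$ being trivial) and use $1 - 1/p' = 1/p$ to conclude $R \leq p^p L$, which is precisely \eqref{2.5}. Finally, part $(ii)$ follows by the same argument upon replacing $G$ by $H(x) = \int_x^b dx'\, w_2(x')\|F(x')\|_{\cB}$, for which $H'=-w_2\|F\|_{\cB}$, $H$ is constant near $a$ and vanishes near $b$, and the surviving boundary term $-w_1(a^+)H(a^+)^p \leq 0$ again carries the favorable sign.

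I expect the only genuine subtlety --- the thing to get right rather than a deep obstacle --- to be the bookkeeping around $-w_1'$: ensuring the boundary term in the integration by parts has the correct sign and is finite, and interpreting the factors $[-w_1']^{\pm(p-1)/p}$ on the (possibly positive-measure) set where $-w_1'=0$, where the stated local-integrability hypothesis forces the accompanying $w_2^p\|F\|_{\cB}^p$ to be negligible. The compact support of $F$ removes all difficulty at the endpoints $a$ and $b$.
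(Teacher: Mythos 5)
Your proposal is correct and takes essentially the same route as the paper's own proof: your integration by parts (with the nonpositive boundary contribution discarded) is precisely the paper's identity \eqref{2.7}, and your H\"older split --- whose $p$-th and $p'$-th powers reproduce exactly the integrands of $L$ and $R$ --- followed by division by $R^{1/p'}$ is exactly the paper's argument, including the symmetric treatment of part $(ii)$. The only cosmetic differences are that you dispatch $p=1$ separately and spell out the handling of the set where $-w_1'$ vanishes, a point the paper passes over silently.
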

\begin{proof}
It suffices to prove \eqref{2.5} and then hint at the analogous proof of \eqref{2.6}. Since
\begin{align}
& \f{d}{dx} \bigg(w_1(x) \bigg(\int_a^x dx' \, w_2(x') \|F(x')\|_{\cB}\bigg)^p\bigg) 
= w_1'(x) \bigg(\int_a^x dx' \, w_2(x') \|F(x')\|_{\cB}\bigg)^p    \no \\
& \quad + p w_1(x) \bigg(\int_a^x dx' \, w_2(x') \|F(x')\|_{\cB}\bigg)^{p-1} w_2(x) \|F(x)\|_{\cB}, 
\end{align}
one obtains 
\begin{align}
& \int_a^b dx \, \f{d}{dx} \bigg(w_1(x) \bigg(\int_a^x dx' \, w_2(x') \|F(x')\|_{\cB}\bigg)^p\bigg)   \no \\
& \quad = w_1(x) \bigg(\int_a^x dx' \, w_2(x') \|F(x')\|_{\cB}\bigg)^p\bigg|_{x=a}^b  \no \\
& \quad =  w_1(b) \bigg(\int_a^b dx' \, w_2(x') \|F(x')\|_{\cB}\bigg)^p   \no \\
& \quad = \int_a^b dx \, w_1'(x) \bigg(\int_a^x dx' \, w_2(x') \|F(x')\|_{\cB}\bigg)^p   \no \\
& \qquad + p \int_a^b dx \, w_1(x) \bigg(\int_a^x dx' w_2(x') \|F(x')\|_{\cB}\bigg)^{p-1} w_2(x) \|F(x)\|_{\cB}.
\lb{2.7} 
\end{align}
Here we used that by hypothesis, $0 \leq w_1$ is monotonically decreasing, and that 
the right-hand side of \eqref{2.7} exists employing $F \in C_0((a,b); \cB)$. Thus, with $p^{-1} + [p']^{-1} = 1$, an application of H\"older's inequality yields 
\begin{align}
& w_1(b) \bigg(\int_a^b dx' \, w_2(x') \|F(x')\|_{\cB}\bigg)^p + 
\int_a^b dx \, [- w_1'(x)] \bigg(\int_a^x dx' \, w_2(x') \|F(x')\|_{\cB}\bigg)^p   \no \\
& \quad = p \int_a^b dx \, w_1(x) \bigg(\int_a^x dx' \, w_2(x') \|F(x')\|_{\cB}\bigg)^{p-1} w_2(x) \|F(x)||_{\cB} 
\no \\
& \quad \leq p \bigg[\int_a^b dx \, [- w_1'(x)] \bigg(\int_a^x dx' \, w_2(x') \|F(x')\|_{\cB}\bigg)^p\bigg]^{1/p'}  \no \\
& \qquad \times \bigg[\int_a^b dx \, w_1(x)^p [- w_1'(x)]^{1-p} w_2(x)^p \|F(x)\|_{\cB}^p \bigg]^{1/p} .
\end{align}
In particular,
\begin{align}
& \int_a^b dx \, [- w_1'(x)] \bigg(\int_a^x dx' \, w_2(x') \|F(x')\|_{\cB}\bigg)^p   \no \\
& \quad \leq p \bigg[\int_a^b dx \, [- w_1'(x)] \bigg(\int_a^x dx' \, w_2(x') \|F(x')\|_{\cB}\bigg)^p\bigg]^{1/p'}  \no \\
& \qquad \times \bigg[\int_a^b dx \, w_1(x)^p [- w_1'(x)]^{1-p} w_2(x)^p \|F(x)\|_{\cB}^p \bigg]^{1/p} 
\end{align}
and hence
\begin{align}
\begin{split} 
& \bigg[\int_a^b dx \, [- w_1'(x)] \bigg(\int_a^x dx' \, w_2(x') \|F(x')\|_{\cB}\bigg)^p\bigg]^{1/p}    \\
& \quad \leq p \bigg[\int_a^b dx \, w_1(x)^p [- w_1'(x)]^{1-p} w_2(x)^p \|F(x)\|_{\cB}^p \bigg]^{1/p}, 
\end{split} 
\end{align}
completing the proof of item $(i)$. 

For the proof of item $(ii)$ one notes the identity 
\begin{align}
& \f{d}{dx} \bigg(w_1(x) \bigg(\int_x^b dx' \, w_2(x') \|F(x')\|_{\cB}\bigg)^p\bigg) 
= w_1'(x) \bigg(\int_x^b dx' \, w_2(x') \|F(x')\|_{\cB}\bigg)^p    \no \\
& \quad - p w_1(x) \bigg(\int_x^b dx' \, w_2(x') \|F(x')\|_{\cB}\bigg)^{p-1} w_2(x) \|F(x)\|_{\cB},   \lb{2.11} 
\end{align}
and then obtains upon integrating \eqref{2.11} with respect to $x$ from $a$ to $b$,
\begin{align}
& w_1(a) \bigg(\int_a^b dx' \, w_2(x') \|F(x')\|_{\cB}\bigg)^p 
+ \int_a^b dx \, w_1'(x) \bigg(\int_x^b dx' \, w_2(x') \|F(x')\|_{\cB}\bigg)^p   \no \\
& \quad = p \int_a^b dx \, w_1(x) w_2(x) \bigg(\int_x^b dx' \, w_2(x') \|F(x')\|_{\cB}\bigg)^{p-1} \|F(x)\|_{\cB}.
\end{align}
In particular,
\begin{align}
\begin{split} 
&  \int_a^b dx \, w_1'(x) \bigg(\int_x^b dx' \, w_2(x') \|F(x')\|_{\cB}\bigg)^p    \\
& \quad \leq p \int_a^b dx \, w_1(x) w_2(x) \bigg(\int_x^b dx' \, w_2(x') \|F(x')\|_{\cB}\bigg)^{p-1} \|F(x)\|_{\cB},
\end{split} 
\end{align}
and now one can repeat the H\"older inequality argument as in item $(i)$. (Alternatively, if $b < \infty$, one can 
also prove item $(ii)$ 
by the change of variable $x \mapsto a + (b - x)$, i.e., by reflecting the interval $(a, b)$ at its midpoint).
\end{proof}

We illustrate our general result with the following well-known special case, the power-weighted Hardy inequality. For pertinent references on inequalities \eqref{2.13}, \eqref{2.14} below, we recall, for instance, 
\cite[Theorem~1.2.1]{BEL15}, \cite{Ha28}, 
\cite[p.~245--246]{HLP88}, \cite[p.~23, 43]{KMP07}, \cite[p.~9--11]{KPS17}, \cite[Lemma~1.3]{OK90}, and the references therein. 

\begin{example} \lb{e2.3}
Let $p \in [1,\infty)$, $a=0$, $b \in(0, \infty) \cup \{\infty\}$, $w_2(x) = 1$, and suppose that the map 
$F : (0,\infty) \to \cB$ is weakly measurable satisfying 
$\|F(\dott)\|_{\cB} \in L^p((0,b); x^{\alpha} dx)$. \\[1mm]  
$(i)$ If $w_1(x) = |\alpha - p + 1|^{-1} x^{- |\alpha - p + 1|}$, $\alpha < p - 1$, then \eqref{2.5} reduces to the classical form 
\begin{equation}
\int_0^b dx \, \, x^{\alpha} \, \|F(x)\|^p_{\cB} \geq 
\bigg(\f{|\alpha - p + 1|}{p}\bigg)^p \int_0^b dx \, x^{\alpha -p} \bigg(\int_0^x dx' \,  \|F(x')\|_{\cB}\bigg)^p.    \lb{2.13} 
\end{equation} 
$(ii)$  If $w_1(x) = [|\alpha - p + 1|]^{-1} x^{|\alpha - p + 1|}$, $\alpha > p - 1$, then \eqref{2.6} reduces to the complementary classical form 
\begin{equation}
\int_0^b dx \, \, x^{\alpha} \, \|F(x)\|^p_{\cB} \geq 
\bigg(\f{|\alpha - p + 1|}{p}\bigg)^p \int_0^b dx \, x^{\alpha -p} \bigg(\int_x^b dx' \,  \|F(x')\|_{\cB}\bigg)^p.    \lb{2.14} 
\end{equation} 
In both cases $(i)$ and $(ii)$, the constant $[(|\alpha - p + 1|)/p]^p$ is best possible and equality holds if and only 
if $F = 0$~a.e.~on $(0,b)$. 
\end{example} 

The case $F \in C_0((0,b); \cB)$ in Example \ref{e2.3} is a corollary of Theorem \ref{t2.2} and optimality of the constants on the right-hand sides of \eqref{2.13}, \eqref{2.14}, and the fact that equality is only attained in the trivial case $F = 0$~a.e.~on $(0,b)$, is a classical result (see, e.g., \cite[Theorem~1.2.1]{BEL15}). The extension of Example \ref{e2.3} to the case $F \in L^p((0,b); x^{\alpha} dx; \cB)$, $p \in [1,\infty)$, follows along the lines in \cite[Theorem~1.14, Sects.~1.3, 1.5]{OK90}. We will briefly return to this issue after Theorem \ref{t3.4}

Iterating the weighted Hardy inequality yields the sequence of vector-valued Birman inequalities as follows.    
Consider the iterated Hardy-type operators,  
\begin{align}
(H_{-,1} F)(x) &= \int_a^x dt_1 \, F(t_1),    \no \\ 
(H_{-,\ell} F)(x) &= H_{-,1} \bigg(\int_a^{\, \fatcdot} dt_2 \, \cdots \, \int_a^{t_{\ell-1}} dt_{\ell} 
\, F(t_{\ell})\bigg)(x)    \no \\
&=\int_a^x dt_1 \int_a^{t_1} dt_2 \, \cdots \, \int_a^{t_{\ell-1}} dt_{\ell} \, F(t_{\ell}) \no \\
&= [(\ell - 1)!]^{-1}\int_a^x dt \, (x - t)^{\ell - 1} F(t), \quad \ell \in \bbN, \; \ell \geq 2,   \lb{2.15} \\
& \hspace*{2.55cm} F \in L^p((a,c); dx) \, \text{ for all $c \in (a,b)$,}   \no \\ 
(H_{+,1} F)(x) &= \int_x^b dt_1 \, F(t_1),    \no \\ 
(H_{+,\ell} F)(x) &= H_{+,1} \bigg(\int_{\, \fatcdot}^b dt_2 \, \cdots \, \int_{t_{\ell-1}}^b dt_{\ell} 
\, F(t_{\ell})\bigg)(x)    \no \\
&= \int_x^b dt_1 \int_{t_1}^b dt_2 \, \cdots \, \int_{t_{\ell-1}}^b dt_{\ell} \, F(t_{\ell})   \no \\
&= [(\ell - 1)!]^{-1}\int_x^b dt \, (x - t)^{\ell - 1} F(t), \quad \ell \in \bbN, \; \ell \geq 2,    \lb{2.16} \\
& \hspace*{2.55cm} F \in L^p((c,b); dx) \, \text{ for all $c \in (a,b)$.}   \no 
\end{align}
Applying \eqref{2.13} and \eqref{2.14} iteratively in the form (with $a=0$) 
\begin{align}
\begin{split} 
& \int_0^b dx \, x^{\gamma - p} [(H_{\mp,1} (G_{\ell}(\dott))(x)]^p  \\
& \quad \leq \bigg(\f{p}{|\gamma - p +1|}\bigg)^p \int_0^{b} dx \, x^{\gamma} G_{\ell}(x)^p, \quad p \in [1, \infty), 
\; \gamma \lessgtr p-1,   \lb{2.16a} 
\end{split} 
\end{align}
for appropriate $0 \leq G_{\ell} \in L^p((0,b);x^{\gamma} dx)$, $p \in [1,\infty)$, 
then yields for $F : (0,\infty) \to \cB$ a weakly measurable map satisfying 
$\|F(\dott)\|_{\cB} \in L^p((0,b); x^{\alpha} dx)$
\begin{align}
&  \int_0^b dx \, x^{\alpha} \|F(x)\|_{\cB}^p    \no \\
& \quad \geq \prod_{k=1}^{\ell} \bigg(\f{|\alpha - kp + 1|}{p}\bigg)^p \int_0^b dx \, x^{\alpha - \ell p} 
[(H_{\mp,\ell} \|F(\, \cdot \,)\|_{\cB})(x)]^p,   \lb{2.17} \\
& \hspace*{2.1cm} 0 < b \leq \infty, \; p \in [1,\infty), \; \alpha \lessgtr \begin{cases} p - 1, \\ \ell p - 1, 
\end{cases}  \ell \in \bbN.   \no 
\end{align} 
It is well-known that the constants in \eqref{2.13}, \eqref{2.14} and \eqref{2.17}  are all optimal and that, in fact, these inequalities are all strict unless $F = 0$ on $(0,b)$. 

Turning to the differential form of the iterated (integral) Hardy inequalities \eqref{2.17}, and adding appropriate boundary conditions for $F$ at both endpoints $a, b$, permits one to avoid the gap 
$(p-1, \ell p -1)$ for $\alpha$ in \eqref{2.17} as follows: Assuming $F \in C_0^{\infty}((a,b); \cB)$ for simplicity, and introducing 
\begin{align}
\begin{split} 
&\wti f(x) = \int_0^x dx' \|F(x')\|_{\cB}, \; x \in (0,b), \quad \wti f^{(n)}(a) = 0, \; n \in \bbN,    \\ 
&\wti g(x) = \int_x^b dx' \|F(x')\|_{\cB}, \; x \in (0,b), \quad \wti g^{(n)}(b) = 0, \; n \in \bbN,   \lb{2.20}
\end{split} 
\end{align}
inequalities \eqref{2.13} and \eqref{2.14} become 
\begin{align}
& \int_0^b dx \, x^{\alpha} \big[{\wti f}^{\prime}(x)\big]^p \geq \bigg(\f{|\alpha - p +1|}{p}\bigg)^p \int_0^b dx \, x^{\alpha - p} \wti f(x)^p, 
\quad \alpha < p -1,  \lb{2.21} \\
& \int_0^b dx \, x^{\alpha} \big[- \wti g'(x)\big]^p \geq 
\bigg(\f{|\alpha - p +1|}{p}\bigg)^p \int_0^b dx \, x^{\alpha - p} \wti g(x)^p, 
\quad \alpha > p -1.   \lb{2.22} 
\end{align}
As a special case one obtains 
\begin{align}
\begin{split} 
\int_0^b dx \, x^{\alpha} |f'(x)|^p \geq \bigg(\f{|\alpha - p +1|}{p}\bigg)^p \int_0^b dx \, x^{\alpha - p} |f(x)|^p,& \\ 
0 < b \leq \infty, \; p \in [1,\infty), \;  \alpha \in \bbR, \; f \in C_0^{\infty}((0,b)).& \lb{2.23} 
\end{split} 
\end{align}
Iterating \eqref{2.23} yields the well-known result 
\begin{align}
\begin{split} 
\int_0^b dx \, x^{\alpha} |f^{(n)}(x)|^p \geq \f{\prod_{j=1}^k |\alpha- jp +1|^p}{p^{kp}}
\int_0^b dx \, x^{\alpha - kp} |f^{(n-k)}(x)|^p,& \\ 
0 < b \leq \infty, \; p \in [1,\infty), \; 1 \leq k \leq n, \; n \in \bbN, \; \alpha \in \bbR, \; f \in C_0^{\infty}((0,b)).&    \lb{2.24} 
\end{split} 
\end{align}

For additional results on higher-order (overdetermined) Hardy-type inequalities see also 
\cite[Ch.~4]{KPS17}, \cite{Na02}, \cite{NS97}, \cite{NS99}.  

\section{More on Weighted Hardy-Type Inequalities} \lb{s3}

To remove the assumption $F \in C_0((0,b); \cB)$ in Theorem \ref{t2.2} and to take a closer look at the issue of best possible constants in the inequality, we next recall (a generalization of) a celebrated 1969 result due to Talenti \cite{Ta69}, Tomaselli \cite{To69}, and shortly afterwards by Chisholm and Everitt \cite{CE70/71} and Muckenhoupt \cite{Mu72}, followed by Chisholm, Everitt, and Littlejohn \cite{CEL99}. 
For exhaustive textbook presentations we refer, for instance, to \cite[Sect.~1.2]{BEL15}, 
\cite[Sect.~5.3]{Da95}, \cite[Sect.~2.2]{EE04}, \cite[Sects.~9.8, 9.9]{HLP88}, \cite[Chs.~3, 4]{KMP07}, 
\cite[Chs.~1, 3, 4]{KPS17}, \cite[Sects.~1.1--1.3, 1.5, 1.6, 1.10]{OK90}. 

In addition to $H_{\mp,1}$ in \eqref{2.15}, \eqref{2.16}, we now also introduce the generalized (weighted) Hardy operators as follows.

\begin{hypothesis} \lb{h3.1}
Let $- \infty \leq a < b \leq \infty$ and $p \in [1,\infty)$. \\[1mm] 
$(i)$ Assume that $v$ and $w$ are weight functions satisfying $v, w$ measurable on $(a,b)$, 
$v > 0$, $w > 0$~a.e.\ on $(a,b)$. \\[1mm]
$(ii)$ Suppose that $\phi_{\mp}, \psi_{\mp}$ satisfy for all $c \in (a,b)$, 
\begin{align}
& \text{$0 < \phi_{\mp}$~a.e.\ on $(a,b)$, 
$0 < \psi_{\mp}$~a.e.\ on $(a,b)$},   \no \\
&\phi_- \in L^{p} ((c,b); v dx), \quad \psi_- \in L^{p'} \big((a,c); w^{-p'/p}dx\big),    \\
&\phi_+ \in L^{p} ((a,c); v dx), \quad \psi_+ \in L^{p'} \big((c,b); w^{-p'/p}dx\big).  \no 
\end{align}
\end{hypothesis}

Given Hypothesis \ref{h3.1} we introduce 
\begin{align}
& (H_{-,\phi_-, \psi_-} F)(x) = \phi_-(x) \int_a^x dx' \, \psi_-(x') F(x'),  \quad x \in (a,b),   \\
& \hspace*{3.35cm} F \in L^p((a,c); w dx) \, \text{ for all $c \in (a,b)$,}   \no \\ 
& (H_{+,\phi_+, \psi_+} F)(x) = \phi_+(x) \int_x^b dx' \, \psi_+(x') F(x'),  \quad x \in (a,b),    \\
& \hspace*{3.32cm} F \in L^p((c,b); w dx) \, \text{ for all $c \in (a,b)$.}   \no  
\end{align} 
In particular, $H_{\mp,1,1} = H_{\mp,1}$.

The following result, Theorem \ref{t3.2}, is well-known and a special case of more general situations recorded in the literature. For instance, we refer to \cite{Br78}, \cite{GKPW03}, 
\cite[p.~38--40]{KMP07}, \cite[Theorem 2.3]{KPS17} (after specializing to the case $\varphi_1=\psi_1=1$), 
and \cite[Theorem~1.14, Lemma~5.4 in Ch.~1]{OK90} (choosing $q=p$ in their results). 

\begin{theorem} \lb{t3.2} 
Assume Hypothesis \ref{h3.1}\,$(i)$. \\[1mm]
$(i)$ There exists a constant $C_- \in (0, \infty)$ such that
\begin{equation}
C_- \bigg(\int_a^b dx \, w(x) F(x)^p\bigg)^{1/p} \geq \bigg(\int_a^b dx \, v(x) [(H_{-,1} F)(x)]^p \bigg)^{1/p}   \lb{3.22}
\end{equation}
for all $F$ measurable on $(a,b)$ and $F \geq 0$~a.e.\ on $(a,b)$, if and only if 
\begin{equation}
A_- := \sup_{c \in (a,b)} \bigg(\int_c^b dx \, v(x)\bigg)^{1/p} 
\bigg(\int_a^c dx \, w(x)^{- p'/p}\bigg)^{1/p'} < \infty.    \lb{3.23}
\end{equation}
$($If $p=1$ and hence $p'=\infty$, the second factor in the right-hand side of \eqref{3.23} is interpreted as 
$\|1/w\|_{L^\infty((a,c);dx)}$.$)$   
Moreover, the smallest constant $C_{0,-} \in (0,\infty)$ in \eqref{3.22} satisfies 
\begin{align}
\begin{split} 
& A_- \leq C_{0,-} \leq p^{1/p} (p')^{1/p'} A_-, \quad p \in (1,\infty),   \\
& C_{0,-} = A_-, \quad p = 1.
\end{split} 
\end{align}
$(ii)$ There exists a constant $C_+ \in (0, \infty)$ such that
\begin{equation}
C_+ \bigg(\int_a^b dx \, w(x) F(x)^p\bigg)^{1/p} \geq \bigg(\int_a^b dx \, v(x) [(H_{+,1} F)(x)]^p \bigg)^{1/p}   \lb{3.25}
\end{equation}
for all $F$ measurable on $(a,b)$ and $F \geq 0$~a.e.\ on $(a,b)$, if and only if
\begin{equation}
A_+ := \sup_{c \in (a,b)} \bigg(\int_a^c dx \, v(x)\bigg)^{1/p} 
\bigg(\int_c^b dx \, w(x)^{- p'/p}\bigg)^{1/p'}     \lb{3.26}
< \infty. 
\end{equation}
$($If $p=1$ and hence $p'=\infty$, the second factor in the right-hand side of \eqref{3.26} is interpreted as 
$\|1/w\|_{L^\infty((c,b);dx)}$.$)$ 
Moreover, the smallest constant $C_{0,+} \in (0,\infty)$ in \eqref{3.25} satisfies 
\begin{align}
\begin{split} 
& A_+ \leq C_{0,+} \leq p^{1/p} (p')^{1/p'} A_+, \quad p \in (1, \infty),    \\
& C_{0,+} = A_+ \quad p = 1.
\end{split} 
\end{align}
\end{theorem}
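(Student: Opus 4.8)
The plan is to prove part $(i)$ in full (this is the classical Muckenhoupt characterization of the Hardy operator $H_{-,1}$) and then to deduce part $(ii)$ by the mirror argument, or, when $b < \infty$, by the reflection $x \mapsto a + (b-x)$ already employed in the proof of Theorem \ref{t2.2}, which interchanges $H_{-,1}$ with $H_{+,1}$ and turns the quantity $A_-$ in \eqref{3.23} into $A_+$ in \eqref{3.26}. Throughout I abbreviate $U(x) = \int_a^x dt \, w(t)^{-p'/p}$ and $V(x) = \int_x^b dt \, v(t)$, and I may assume $F \ge 0$ with the right-hand side of \eqref{3.22} finite (otherwise there is nothing to prove). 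A preliminary observation is that, once $A_- < \infty$ is in force, the standing hypothesis $v, w > 0$ a.e.\ forces $U(c) < \infty$ for every $c \in (a,b)$: if some $U(c_0)$ were infinite, then \eqref{3.23} would require $\int_{c_0}^b dx \, v = 0$, contradicting $v > 0$ a.e. Hence $U$ is finite, continuous, strictly increasing, with $U(a_+) = 0$ and $U(b_-) \in (0, \infty]$, a fact I use repeatedly below.

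For necessity together with the lower bound $A_- \le C_{0,-}$, I would insert the test function $F = w^{-p'/p} \chi_{(a,c)}$ into \eqref{3.22}. For $x > c$ one has $(H_{-,1} F)(x) = U(c)$, so restricting the left integral to $(c,b)$ gives $\int_a^b dx \, v \, [(H_{-,1}F)]^p \ge V(c) \, U(c)^p$, whereas $\int_a^b dx \, w F^p = \int_a^c dx \, w^{1 - p'} = U(c)$ because $1 - p' = -p'/p$. Feeding these into \eqref{3.22} and cancelling one factor of $U(c)$ yields $V(c)^{1/p} U(c)^{1/p'} \le C_-$, and taking the supremum over $c$ produces $A_- \le C_-$; a routine truncation of $F$ covers the degenerate case $U(c) = \infty$. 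In particular a finite constant forces $A_- < \infty$, and $A_- \le C_{0,-}$.

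For sufficiency and the sharp upper bound when $p \in (1,\infty)$, assume $A_- < \infty$. The device is to write, for a parameter $\eta \in (0, 1/p')$, the factorization $F = \big[F w^{1/p} U^{\eta}\big] \cdot \big[w^{-1/p} U^{-\eta}\big]$ inside $\int_a^x dt \, F$ and to apply H\"older with exponents $p, p'$; since $U' = w^{-p'/p}$, the second factor integrates explicitly to a power of $U(x)$, giving
\[
\Big(\int_a^x dt \, F(t)\Big)^p \le (1 - \eta p')^{-p/p'} \, U(x)^{p/p' - \eta p} \int_a^x dt \, F(t)^p \, w(t) \, U(t)^{\eta p}.
\]
Multiplying by $v(x)$, integrating in $x$ over $(a,b)$, and applying Tonelli to exchange the order of integration reduces everything to estimating the tail $\int_t^b dx \, v(x) \, U(x)^{p/p' - \eta p}$. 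Writing $v = -V'$ and integrating by parts gives $V(t) U(t)^{s} + s \int_t^b dx \, V \, U^{s-1} U'$ with $s = p/p' - \eta p > 0$; the boundary contribution at $b$ is nonpositive and may be discarded (this is exactly where the sign $\eta > 0$ matters, so that the argument is insensitive to whether $U(b_-)$ is finite or infinite), and inserting the Muckenhoupt bound $V(x) \le A_-^p \, U(x)^{-p/p'}$ from \eqref{3.23} collapses the tail to $(\eta p')^{-1} A_-^p \, U(t)^{-\eta p}$. The powers of $U(t)$ cancel, leaving
\[
\int_a^b dx \, v(x) \, [(H_{-,1} F)(x)]^p \le \f{A_-^p}{\eta p' \, (1 - \eta p')^{p/p'}} \int_a^b dx \, w(x) \, F(x)^p .
\]
Minimizing the displayed constant over $\eta$ yields the optimal choice $\eta = 1/(pp')$, i.e.\ $\eta p' = 1/p$; using $1/p + 1/p' = 1$ (equivalently $p + p' = pp'$) the minimal value of $[\eta p' (1 - \eta p')^{p/p'}]^{-1}$ equals $p \, (p')^{p/p'}$, so that $C_{0,-} \le p^{1/p} (p')^{1/p'} A_-$. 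The case $p = 1$ is handled directly: Tonelli gives $\int_a^b dx \, v \, (H_{-,1} F) = \int_a^b dt \, F \, V$, the condition \eqref{3.23} read with the stated $L^\infty$ convention is equivalent to $V \le A_- w$ a.e., and hence $C_{0,-} = A_-$.

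I expect the main obstacle to be precisely the sufficiency tail estimate and the extraction of the sharp constant: one must choose the H\"older exponent $\eta$ with the correct sign and admissible range so that the integration-by-parts boundary term at $b$ drops out uniformly in $U(b_-)$, and then carry out the elementary optimization, which hinges on the identity $p + p' = pp'$ to collapse the bracket to the sharp value $p^{1/p}(p')^{1/p'}$. The remaining measure-theoretic points (justifying Tonelli, the truncation in the necessity step, and the $L^\infty$ reading of \eqref{3.23} when $p = 1$) are routine.
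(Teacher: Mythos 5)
Your proof is correct, but the comparison here is somewhat degenerate: the paper does not prove Theorem \ref{t3.2} at all. It states the theorem as well known and refers to \cite{Br78}, \cite{GKPW03}, \cite[p.~38--40]{KMP07}, \cite[Theorem~2.3]{KPS17}, and \cite[Theorem~1.14, Lemma~5.4 in Ch.~1]{OK90}, so what you have produced is a self-contained reconstruction of the classical Muckenhoupt--Tomaselli argument \cite{Mu72}, \cite{To69} that those references contain. Your steps check out: necessity and the lower bound $A_- \leq C_{0,-}$ via the test functions $w^{-p'/p}\chi_{(a,c)}$ (with truncation when $U(c) = \infty$); sufficiency via H\"older with the auxiliary weight $U^{\eta}$, Tonelli, and the integration-by-parts tail estimate combined with the pointwise bound $V \leq A_-^p U^{-p/p'}$; the collapse of the tail to $(\eta p')^{-1} A_-^p U(t)^{-\eta p}$ using $1 + (p/p' - \eta p)/(\eta p) = 1/(\eta p')$; the optimization at $\eta p' = 1/p$ giving $[\eta p' (1-\eta p')^{p/p'}]^{-1} = p\,(p')^{p/p'}$ and hence $C_{0,-} \leq p^{1/p}(p')^{1/p'} A_-$; and the $p=1$ case, where \eqref{3.23} under the stated $L^\infty$ convention is indeed equivalent to $V \leq A_- w$ a.e.\ (by monotonicity of $V$ and a countable exhaustion in $c$). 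The reduction of item $(ii)$ by reflection, or by rerunning the argument with $U_+(x) = \int_x^b w^{-p'/p}$ and $V_+(x) = \int_a^x v$ when $b = \infty$, is also fine. Two cosmetic slips, neither of which affects anything: the finiteness you may assume without loss is that of the \emph{left}-hand side of \eqref{3.22} (the weighted norm of $F$), and the role of $\eta > 0$ is really to make $\int^b U^{-\eta p - 1} U'$ converge at $b$ when $U(b_-) = \infty$ (the discarded boundary term is nonpositive regardless). What your route buys relative to the paper is self-containedness and an explicit derivation of the factor $p^{1/p}(p')^{1/p'}$; what the paper's citation buys is brevity and access to the more general $(p,q)$ versions in \cite{OK90}, \cite{KPS17}, which your argument does not (and need not) cover.
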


We emphasize that items $(i)$ and $(ii)$ in Theorem \ref{t3.2} do not exclude the trivial case where the left-hand sides of \eqref{3.22} and \eqref{3.25} are infinite. 

We also note that Theorem \ref{t3.2} naturally extends to $p = \infty$, but as we will not use this in this note we omit further details (cf.\ \cite[Sect.~1.5]{OK90}). Moreover, \cite[Sects.~1.3, 1.5]{OK90} actually discuss the more general case with $p$ replaced by $q \in [1,\infty) \cup \{\infty\}$ on the right-hand sides of \eqref{3.23}, \eqref{3.26}.  

To extend the considerations in Theorem \ref{t3.2} to the case where $H_{\mp,1}$ is replaced by the weighted Hardy operator $H_{\mp,\phi_{\mp},\psi_{\mp}}$ one recalls the following elementary fact, still assuming $F \geq 0$ a.e.~on $(a,b)$.
\begin{align}
& \|H_{-,\phi_-,\psi_-} F\|_{L^p((a,b);vdx)} = \|H_{-,1} (\psi_- F)\|_{L^p((a,b);v \phi_-^pdx)}   \no \\
& \quad = \bigg(\int_a^b dx \, v(x) \phi_-(x)^p \bigg|\int_a^x dx' \, \psi_-(x') F(x')\bigg|^p\bigg)^{1/p}   \no \\
& \quad \leq \wti C_-\bigg(\int_a^b dx \, w(x) \psi_-(x)^{-p} [\psi_-(x) F(x)]^p\bigg)^{1/p}    \no \\
& \quad = \wti C_- \|F\|_{L^p((a,b); wdx)}    \no \\
& \quad = \wti C_- \|\psi_- F\|_{L^p((a,b); w \psi_-^{-p} dx)},     \lb{3.28}
\end{align}
as  well as 
\begin{align}
& \|H_{+,\phi_+,\psi_+} F\|_{L^p((a,b);vdx)} = \|H_{+,1} (\psi_+ F)\|_{L^p((a,b);v \phi_+^pdx)}   \no \\
& \quad = \bigg(\int_a^b dx \, v(x) \phi_+(x)^p \bigg|\int_x^b dx' \, \psi_+(x') F(x')\bigg|^p\bigg)^{1/p}   \no \\
& \quad \leq \wti C_+ \bigg(\int_a^b dx \, w(x) \psi_+(x)^{-p} [\psi_+(x) F(x)]^p\bigg)^{1/p}    \no \\
& \quad = \wti C_+ \|F\|_{L^p((a,b); wdx)}    \no \\
& \quad = \wti C_+ \|\psi_+ F\|_{L^p((a,b); w \psi_+^{-p} dx)},     \lb{3.29}
\end{align}
are equivalent to 
\begin{align}
\big\|H_{-,1} \wti F_-\big\|_{L^p((a,b);v \phi_-^pdx)} 
\leq \wti C_- \big\|\wti F_-\big\|_{L^p((a,b); w \psi_-^{-p} dx)},   \lb{3.30} \\
\big\|H_{+,1} \wti F_+\big\|_{L^p((a,b);v \phi_+^pdx)} 
\leq \wti C_- \big\|\wti F_+\big\|_{L^p((a,b); w \psi_+^{-p} dx)},   \lb{3.31} 
\end{align}
upon identifying $\wti F_{\mp} = \psi_{\mp} F \geq 0$ and replacing the original weights $v$ and $w$ by 
$\wti v = v \phi_{\mp}^p$ and $\wti w = w \psi_{\mp}^{-p}$, respectively. 

Thus, one obtains the following consequence of Theorem \ref{t3.2}, \eqref{3.28}--\eqref{3.31} (see also 
\cite[Theorem~2.3]{KPS17}):

\begin{corollary} \lb{c3.3} 
Assume Hypothesis \ref{h3.1}. \\[1mm]
$(i)$ There exists a constant $\wti C_- \in (0, \infty)$ such that
\begin{equation}
\big(\wti C_-\big)^p \int_a^b dx \, w(x) G(x)^p \geq \int_a^b dx \, v(x) [(H_{-,\phi_-,\psi_-} G)(x)]^p   \lb{3.32}
\end{equation}
for all $G$ measurable on $(a,b)$ and $G \geq 0$~a.e.\ on $(a,b)$, if and only if
\begin{equation}
\wti A_- := \sup_{c \in (a,b)} \bigg(\int_c^b dx \, v(x) \phi_-(x)^p\bigg)^{1/p} 
\bigg(\int_a^c dx \, w(x)^{-p'/p} \psi_-(x)^{p'}\bigg)^{1/p'} < \infty.      \lb{3.33} 
\end{equation}
$($If $p=1$ and hence $p'=\infty$, the second factor in the right-hand side of \eqref{3.33} is interpreted as 
$\|\psi_-/w\|_{L^\infty((a,c);dx)}$.$)$  
Moreover, the smallest constant $\wti C_{0,-} \in (0,\infty)$ in \eqref{3.32} satisfies 
\begin{align}
\begin{split} 
&\wti A_- \leq \wti C_{0,-} \leq p^{1/p} (p')^{1/p'} \wti A_-, \quad p \in (1,\infty),   \\
& \wti C_{0,-} = \wti A_-, \quad p = 1.     \lb{3.34}
\end{split} 
\end{align}
$(ii)$ There exists a constant $\wti C_+ \in (0, \infty)$ such that
\begin{equation}
\big(\wti C_+\big)^p \int_a^b dx \, w(x) G(x)^p \geq \int_a^b dx \, v(x) [(H_{+,\phi_+,\psi_+} G)(x)]^p   \lb{3.35}
\end{equation}
for all measurable $G$ on $(a,b)$ and $G \geq 0$~a.e.\ on $(a,b)$, if and only if
\begin{equation}
\wti A_+ := \sup_{c \in (a,b)} \bigg(\int_a^c dx \, v(x) \phi_+(x)^p\bigg)^{1/p} 
\bigg(\int_c^b dx \, w(x)^{-p'/p} \psi_+(x)^{p'}\bigg)^{1/p'} < \infty.      \lb{3.36}
\end{equation}
$($If $p=1$ and hence $p'=\infty$, the second factor in the right-hand side of \eqref{3.36} is interpreted as 
$\|\psi_+/w\|_{L^\infty((c,b);dx)}$.$)$  
Moreover, the smallest constant $\wti C_{0,+} \in (0,\infty)$ in \eqref{3.35} satisfies 
\begin{align}
\begin{split} 
&\wti A_+ \leq \wti C_{0,+} \leq p^{1/p} (p')^{1/p'} \wti A_+, \quad p \in (1, \infty),    \\
& \wti C_{0,+} = \wti A_+, \quad p = 1.     \lb{3.37}
\end{split} 
\end{align}
\end{corollary}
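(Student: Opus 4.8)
The plan is to obtain Corollary~\ref{c3.3} as an immediate consequence of Theorem~\ref{t3.2} by exploiting the chain of equivalences \eqref{3.28}--\eqref{3.31} recorded above. Those identities show that the weighted inequality \eqref{3.32} for the operator $H_{-,\phi_-,\psi_-}$ with weights $v,w$ is \emph{equivalent} to the unweighted inequality \eqref{3.30} for $H_{-,1}$, once one substitutes $\wti F_- = \psi_- F$ and replaces the pair $(v,w)$ by the modified weights
\begin{equation*}
\wti v := v \phi_-^p, \qquad \wti w := w \psi_-^{-p}.
\end{equation*}
First I would check that Theorem~\ref{t3.2}\,$(i)$ is applicable to the pair $(\wti v, \wti w)$: since $v,w>0$ a.e.\ by Hypothesis~\ref{h3.1}\,$(i)$ and $\phi_-,\psi_->0$ a.e.\ by Hypothesis~\ref{h3.1}\,$(ii)$, the weights $\wti v, \wti w$ are again measurable and strictly positive a.e.\ on $(a,b)$, so that Hypothesis~\ref{h3.1}\,$(i)$ holds for them.

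Next I would invoke Theorem~\ref{t3.2}\,$(i)$ with $(v,w)$ replaced by $(\wti v, \wti w)$. It asserts that \eqref{3.30}, hence \eqref{3.32}, holds for a finite constant if and only if the associated quantity $\sup_{c}\big(\int_c^b \wti v\,dx\big)^{1/p}\big(\int_a^c \wti w^{-p'/p}\,dx\big)^{1/p'}$ is finite. The one computation to carry out is the simplification of the second factor, namely
\begin{equation*}
\wti w^{-p'/p} = \big(w \psi_-^{-p}\big)^{-p'/p} = w^{-p'/p}\, \psi_-^{p p'/p} = w^{-p'/p}\, \psi_-^{p'},
\end{equation*}
which turns that supremum into precisely $\wti A_-$ of \eqref{3.33}. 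I would then note that the finiteness, for each fixed $c$, of the two factors entering $\wti A_-$ is guaranteed by the integrability hypotheses $\phi_- \in L^p((c,b);v\,dx)$ and $\psi_- \in L^{p'}((a,c);w^{-p'/p}\,dx)$ of Hypothesis~\ref{h3.1}\,$(ii)$, so that the content of $\wti A_- < \infty$ lies solely in the uniformity in $c$. The two-sided bound \eqref{3.34} for the smallest constant $\wti C_{0,-}$, including the separate reading in the endpoint case $p=1$ as $\|\psi_-/w\|_{L^\infty((a,c);dx)}$, then transfers verbatim from the corresponding estimate in Theorem~\ref{t3.2}\,$(i)$.

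Item~$(ii)$ I would treat identically, now using the equivalence among \eqref{3.35}, \eqref{3.29}, and \eqref{3.31}, applying Theorem~\ref{t3.2}\,$(ii)$ to the modified weights $v\phi_+^p$ and $w\psi_+^{-p}$, and performing the same exponent simplification to recover \eqref{3.36} and the constant bounds \eqref{3.37}; alternatively, for $b<\infty$ one may derive $(ii)$ from $(i)$ by the reflection $x \mapsto a+(b-x)$. I do not expect a genuine obstacle here: the entire argument is a transcription of Theorem~\ref{t3.2} through a change of weights, and the only step requiring care is the routine bookkeeping of the substitution together with the verification that Hypothesis~\ref{h3.1}\,$(i)$ is preserved under $(v,w)\mapsto(\wti v,\wti w)$.
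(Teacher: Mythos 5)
Your proposal is correct and coincides with the paper's own route: the paper likewise obtains Corollary \ref{c3.3} directly from Theorem \ref{t3.2} via the identifications $\wti F_{\mp} = \psi_{\mp} F$, $\wti v = v\phi_{\mp}^p$, $\wti w = w\psi_{\mp}^{-p}$ recorded in \eqref{3.28}--\eqref{3.31}, with the exponent computation $\wti w^{-p'/p} = w^{-p'/p}\psi_{\mp}^{p'}$ yielding \eqref{3.33}, \eqref{3.36} and the constant bounds transferring unchanged (including the $p=1$ reading $1/\wti w = \psi_{\mp}/w$).
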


An application of Corollary \ref{c3.3} then permits one to remove the hypothesis 
$F \in C_0((0,b); \cB)$ in Theorem \ref{t2.2}, Example \ref{e2.3}, and \eqref{2.17} as follows.

\begin{theorem} \lb{t3.4}
Let $p \in [1,\infty)$. \\[1mm] 
$(i)$ In addition to Hypothesis \ref{h2.1}\,$(i)$, assume that $w_j > 0$ a.e.~on $(a,b)$, $j=1,2$, 
and that 
\begin{equation}
F \in L^p\big((a,b); w_1^p [- w_1']^{1-p} w_2^p \, dx; \cB\big).     \lb{3.39}
\end{equation}
Then
\begin{align} 
\begin{split} 
& \int_a^b dx \, w_1(x)^p [- w_1'(x)]^{1-p} w_2(x)^p \|F(x)\|_{\cB}^p   \\
& \quad \geq p^{-p} \int_a^b dx \, [- w_1'(x)] \bigg(\int_a^x dx' \, w_2(x') \|F(x')\|_{\cB}\bigg)^p.   \lb{3.40} 
\end{split}
\end{align}
$(ii)$  In addition to Hypothesis \ref{h2.1}\,$(ii)$, assume that $w_j > 0$ a.e.~on $(a,b)$, 
$j=1,2$, and that 
\begin{equation} 
F \in L^p\big((a,b); w_1^p [w_1']^{1-p} w_2^p \, dx; \cB\big).     \lb{3.41} 
\end{equation} 
Then
\begin{align} 
\begin{split} 
& \int_a^b dx \, w_1(x)^p [w_1'(x)]^{1-p} w_2(x)^p \|F(x)\|_{\cB}^p    \\
& \quad \geq p^{-p} \int_a^b dx \, w_1'(x) \bigg(\int_x^b dx' \, w_2(x') \|F(x')\|_{\cB}\bigg)^p.    \lb{3.42} 
\end{split}
\end{align}
\end{theorem}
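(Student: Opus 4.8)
The plan is to recognize \eqref{3.40} and \eqref{3.42} as special instances of the sharp scalar weighted Hardy inequalities of Corollary \ref{c3.3}, and to extract the precise constant $p^{-p}$ from the quantitative bound on the best constant recorded there. Since both inequalities involve $F$ only through the nonnegative scalar function $\|F(\dott)\|_{\cB}$, I would first reduce to the scalar setting by setting $G := \|F(\dott)\|_{\cB} \geq 0$; the membership condition \eqref{3.39} then reads exactly $G \in L^p\big((a,b); w\,dx\big)$ with $w := w_1^p [-w_1']^{1-p} w_2^p$. For part $(i)$ I would invoke Corollary \ref{c3.3}\,$(i)$ with the choices $\phi_- \equiv 1$, $\psi_- := w_2$, $v := [-w_1']$, and $w := w_1^p [-w_1']^{1-p} w_2^p$, so that $(H_{-,\phi_-,\psi_-} G)(x) = \int_a^x w_2\, G$ and the two sides of \eqref{3.32} coincide, up to the constant, with the two sides of \eqref{3.40}. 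It then remains to control the Muckenhoupt-type quantity $\wti A_-$ from \eqref{3.33}.

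The crucial computation is that, with these weights, the exponent arithmetic collapses. Using $p'/p = 1/(p-1)$ one checks that $(p-1)p'/p = 1$, whence $w^{-p'/p} = w_1^{-p'} [-w_1'] w_2^{-p'}$ and therefore $w^{-p'/p} \psi_-^{p'} = w_1^{-p'} [-w_1']$. Because $w_1 > 0$ lies in $AC_{loc}((a,b))$, one has $w_1^{-p'}[-w_1'] = (p'-1)^{-1} \f{d}{dx}\big(w_1^{1-p'}\big)$ a.e.; integrating and discarding the nonnegative boundary contribution at $a$ (using $1-p' < 0$ together with the monotonicity of $w_1$) gives $\int_a^c w^{-p'/p} \psi_-^{p'}\,dx \leq (p'-1)^{-1} w_1(c)^{1-p'}$, while $\int_c^b v\,dx = w_1(c) - w_1(b_-) \leq w_1(c)$. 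Combining these two estimates and invoking $p^{-1} + (p')^{-1} = 1$, the powers of $w_1(c)$ cancel, yielding the $c$-independent bound $\wti A_- \leq (p'-1)^{-1/p'} = (p-1)^{1/p'} < \infty$.

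Feeding $\wti A_- \leq (p-1)^{1/p'}$ into the upper estimate $\wti C_{0,-} \leq p^{1/p} (p')^{1/p'} \wti A_-$ of \eqref{3.34} and using the identity $(p')^{1/p'}(p-1)^{1/p'} = \big(p'(p-1)\big)^{1/p'} = p^{1/p'}$, one obtains $\wti C_{0,-} \leq p^{1/p} p^{1/p'} = p$. Thus \eqref{3.32} holds with $\wti C_- = p$, which is precisely \eqref{3.40} after dividing by $p^p$; the case $p=1$ is handled identically through the $L^\infty$-interpretation of the second factor (where $\psi_-/w = 1/w_1$) together with the equality $\wti C_{0,-} = \wti A_-$. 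Part $(ii)$ follows by the symmetric choice $\phi_+ \equiv 1$, $\psi_+ := w_2$, $v := w_1'$, $w := w_1^p [w_1']^{1-p} w_2^p$ in Corollary \ref{c3.3}\,$(ii)$, estimating $\wti A_+$ by the same mechanism with $w_1$ now increasing and the boundary term discarded at $b$; alternatively, when $b < \infty$, part $(ii)$ reduces to part $(i)$ via the reflection $x \mapsto a + (b-x)$ as in the proof of Theorem \ref{t2.2}.

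The main obstacle I anticipate is the verification $\wti A_- < \infty$ with the sharp bound $(p-1)^{1/p'}$: one must carry out the exponent bookkeeping correctly (in particular $(p-1)p'/p = 1$, so that $w^{-p'/p}$ contributes exactly one power of $[-w_1']$), justify writing $w_1^{-p'}[-w_1']$ as the a.e.\ derivative of $(p'-1)^{-1} w_1^{1-p'}$ using only $w_1 \in AC_{loc}$ and $w_1 > 0$ a.e.\ (which is exactly why the positivity hypothesis $w_j > 0$ is added in Theorem \ref{t3.4}), and confirm that the discarded boundary terms carry the correct sign. A minor point to address is that Corollary \ref{c3.3} presumes $v > 0$ a.e.; on the set where $-w_1'$ vanishes the left-hand weight $[-w_1']^{1-p}$ is $+\infty$ for $p>1$, so finiteness of \eqref{3.39} forces $F = 0$ there and that set may be removed. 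Once these points are settled, the cancellation of the $w_1(c)$ factors and the arithmetic identity $p^{1/p}(p')^{1/p'}(p-1)^{1/p'} = p$ deliver the sharp constant $p^{-p}$ automatically.
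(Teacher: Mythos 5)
Your proposal is correct and follows essentially the same route as the paper's own proof: the identification $G = \|F(\dott)\|_{\cB}$, $\phi_- = 1$, $\psi_- = w_2$, $v = [-w_1']$, $w = w_1^p[-w_1']^{1-p}w_2^p$ in Corollary \ref{c3.3}, the exponent reduction $w^{-p'/p}\psi_-^{p'} = w_1^{-p'}[-w_1']$, the bound $\wti A_- \leq (p'-1)^{-1/p'}$ with the resulting arithmetic $p^{1/p}(p')^{1/p'}\wti A_- \leq p$, and the separate $p=1$ case via $\wti C_{0,-} = \wti A_-$ are all exactly the paper's steps. Your added remark on handling the set where $-w_1'$ vanishes (so that $v > 0$ a.e.\ in Hypothesis \ref{h3.1} is legitimately invoked) is a sensible refinement the paper passes over silently.
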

\begin{proof}
It suffices to consider item $(i)$ as item $(ii)$ is proved analogously. Identifying 
\begin{align}
G(\, \cdot \,) = \|F(\, \cdot \,)\|_{\cB}, \quad w = w_1^p [- w_1']^{1-p} w_2^p, \quad 
v = [- w_1'], \quad \phi_- = 1, \quad  \psi_- = w_2 
\end{align} 
in Corollary \ref{c3.3}\,$(i)$, the estimate \eqref{3.32} proves boundedness of the weighted Hardy operator 
\begin{equation} 
H_{-,1,w_2} \in \cB\big(L^p\big((a,b); w_1^p [- w_1']^{1-p} w_2^p \, dx\big), 
L^p\big((a,b); [- w_1'] \, dx\big)\big)       \lb{3.45} 
\end{equation}
if and only if 
\begin{align}
\wti A_- & = \sup_{c \in (a,b)} \Bigg[\bigg(\int_c^b dx \, [- w_1'(x)]\bigg)^{1/p}    \no \\
& \quad \times 
\bigg(\int_a^c dx \, \big\{w_1(x)^p \big[- w_1'(x)]^{1-p} w_2(x)^p\big\}^{-p'/p} w_2(x)^{p'}\bigg)^{1/p'}\Bigg]
\no \\
& = \sup_{c \in (a,b)} \Bigg[\bigg(\int_c^b dx \, [- w_1'(x)]\bigg)^{1/p} 
\bigg(\int_a^c dx \, w_1(x)^{-p'} [- w_1'(x)]\bigg)^{1/p'}\Bigg] < \infty,      \lb{3.46}
\end{align}
employing $- p'/p = 1-p'$, $-(1-p)p'/p=1$, temporarily assuming $p \in (1,\infty)$.
The constant $\wti A_-$ is easily estimated and one obtains 
\begin{align}
\wti A_- &= \sup_{c \in (a,b)} \Bigg[[w_1(c) - w_1(b)]^{1/p} 
\bigg[\f{w_1(c)^{1-p'} - w_1(a)^{1-p'}}{p' - 1}\bigg]^{1/p'}\Bigg]   \no \\
& \leq (p' - 1)^{-1/p'} \sup_{c \in (a,b)} \big[w_1(c)^{(1/p) + [(1-p')/p']}\big]    \no \\ 
& = (p' - 1)^{-1/p'} = (p/p')^{1/p'} < \infty, \quad p \in (1,\infty).    \lb{3.47} 
\end{align} 
Thus, $\wti C_- \in (0,\infty)$ as in \eqref{3.32} exists, implying \eqref{3.45}. Given the estimate \eqref{3.47},
the smallest constant $\wti C_{0,-}$ as in \eqref{3.32}, \eqref{3.34} satisfies 
\begin{equation}
\wti C_{0,-} \leq p^{1/p} (p')^{1/p'} \wti A_- \leq p^{1/p} (p')^{1/p'} (p/p')^{1/p'} = p,
\end{equation}
proving the estimate \eqref{3.40}. 

In the case $p=1$, $p' = \infty$, the analog of \eqref{3.46} becomes 
\begin{align}
\wti A_- & = \sup_{c \in (a,b)} \bigg[\bigg(\int_c^b dx \, 
[- w_1'(x)]\bigg) \|1/w_1\|_{L^{\infty}((a,c);dx)}\bigg]   \no \\ 
& = \sup_{c \in (a,b)} \Big[[w_1(c) - w_1(b)] \|1/w_1\|_{L^{\infty}((a,c);dx)}\Big]     \no \\
& = \sup_{c \in (a,b)}\big[ [w_1(c) - w_1(b)] w_1(c)^{-1}\big]      \no \\
& = \sup_{c \in (a,b)}\big[1 - [w_1(b)/w_1(c)]\big]      \no \\
& = \big[1 - [w_1(b)/w_1(a)]\big]  \leq 1, \quad p=1,      
\lb{3.48}
\end{align}
and hence the fact $\wti C_{0,-} = \wti A_-$, according to \eqref{3.33}, also yields \eqref{3.40} for $p=1$. 
\end{proof}

In particular, we now removed the hypothesis $F \in C_0((0,b); \cB)$ in Theorem \ref{t2.2} and replaced it by \eqref{3.39}, \eqref{3.41}. Consequently, this illustrates that Example \ref{e2.3} and 
\eqref{2.17} now extend from $F \in C_0((0,b); \cB)$ to $F \in L^p((a,b); x^{\alpha} dx; \cB)$. 

Due to the fundamental importance of the constants $\wti A_{\mp}$ in connection with smallest constants 
$\wti C_{0,\mp}$ in Hardy-type inequalities (as detailed in \eqref{3.34}, \eqref{3.37}), we now take a second look at them.

\begin{lemma} \lb{l3.5}
Let $p \in [1,\infty)$. \\[1mm] 
$(i)$ Assume Hypothesis \ref{h2.1}\,$(i)$, then
\begin{equation}
\wti A_- = \begin{cases} (p/p')^{1/p'} \Big[1 - \big[w_1(b)^{1/p}/w_1(a)^{1/p}\big]\Big], & p \in (1,\infty), \\[2mm] 
\big[1 - [w_1(b)/w_1(a)]\big], & p = 1.   
 \end{cases}      \lb{3.49}
\end{equation}
In particular, if $w_1(b) = 0$, or $1/w_1(a) = 0$, then 
\begin{equation}
\wti A_- = \begin{cases} (p/p')^{1/p'}, &  p \in (1,\infty), \\[1mm] 
1, & p = 1.   
 \end{cases}      \lb{3.50}
\end{equation}
$(ii)$ Assume Hypothesis \ref{h2.1}\,$(ii)$, then
\begin{equation}
\wti A_+ = \begin{cases} (p/p')^{1/p'} \Big[1 - \big[w_1(a)^{1/p}/w_1(b)^{1/p}\big]\Big], & p \in (1,\infty), \\[2mm] 
\big[1 - [w_1(a)/w_1(b)]\big], & p = 1.   
 \end{cases}      \lb{3.51}
\end{equation}
In particular, if $w_1(a) = 0$, or $1/w_1(b) = 0$, then 
\begin{equation}
\wti A_+ = \begin{cases} (p/p')^{1/p'}, &  p \in (1,\infty), \\[1mm] 
1, & p = 1.   
 \end{cases}      \lb{3.52}
\end{equation}\end{lemma}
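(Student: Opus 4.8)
The plan is to evaluate the supremum defining $\wti A_-$ exactly, rather than merely bounding it as in the proof of Theorem \ref{t3.4}. I would start from the intermediate expression already obtained there (the first line of \eqref{3.47}), namely
\begin{equation*}
\wti A_- = \sup_{c \in (a,b)} \Bigg[[w_1(c) - w_1(b)]^{1/p} \bigg[\f{w_1(c)^{1-p'} - w_1(a)^{1-p'}}{p' - 1}\bigg]^{1/p'}\Bigg], \quad p \in (1,\infty),
\end{equation*}
and reduce it to a one-dimensional optimization. Because $w_1 \in AC_{loc}((a,b))$ is continuous and, by Hypothesis \ref{h2.1}\,$(i)$, nonincreasing, its range includes the open interval $(w_1(b), w_1(a))$, where $w_1(a), w_1(b)$ denote the one-sided endpoint limits (which exist by monotonicity). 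Setting $A = w_1(a)$, $B = w_1(b)$ and substituting $t = w_1(c)$, the supremum becomes $\sup_{t \in (B,A)} g(t)$ with $g(t) = (p'-1)^{-1/p'}(t-B)^{1/p}(t^{1-p'}-A^{1-p'})^{1/p'}$; by continuity of $g$ this equals the maximum of $g$ over the closed interval $[B,A]$. The degenerate case $A = B$ (i.e.\ $w_1$ constant, $-w_1' = 0$ a.e.) is trivial since then $g \equiv 0$ and both sides of \eqref{3.49} vanish.

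Next I would locate the maximizer by logarithmic differentiation. Computing $(\ln g)'(t) = \f{1}{p(t-B)} + \f{(1-p')t^{-p'}}{p'(t^{1-p'}-A^{1-p'})}$ and invoking the conjugate-index identity $(p'-1)/p' = 1/p$, the stationarity equation collapses to $t^{1-p'}-A^{1-p'} = t^{-p'}(t-B)$, i.e.\ $A^{1-p'} = B\,t^{-p'}$, whose unique solution is $t^* = A^{1/p}B^{1/p'}$. Since $t^*$ is a weighted geometric mean of $A$ and $B$ with weights $1/p + 1/p' = 1$, it lies in $[B,A]$, and strictly inside when $0 < B < A < \infty$; as $g$ vanishes at both $t = B$ (through the factor $(t-B)^{1/p}$) and $t = A$ (through the factor $(t^{1-p'}-A^{1-p'})^{1/p'}$), $t^*$ is the global maximizer, so $\wti A_- = g(t^*)$.

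The principal step is then the algebraic evaluation of $g(t^*)$. Here $t^* - B = B^{1/p'}(A^{1/p}-B^{1/p})$, and a short computation using $1 - p' = -p'/p$ gives $(t^*)^{1-p'} - A^{1-p'} = A^{-p'/p}B^{-1/p}(A^{1/p}-B^{1/p})$; raising these two factors to the powers $1/p$ and $1/p'$ and multiplying, the powers of $B$ cancel and those of $(A^{1/p}-B^{1/p})$ add to $1$, leaving $g(t^*) = (p'-1)^{-1/p'}[1 - (B/A)^{1/p}]$. Rewriting $(p'-1)^{-1/p'} = (p/p')^{1/p'}$ via $p' - 1 = p'/p$ yields \eqref{3.49} for $p \in (1,\infty)$, while the case $p = 1$ is precisely the computation already recorded in \eqref{3.48}. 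The boundary cases $w_1(b) = 0$ and $1/w_1(a) = 0$ each force $(B/A)^{1/p} = 0$ (the maximizing value of $t$ is now approached at an endpoint, but $\sup_t g$ still equals $(p/p')^{1/p'}$), giving \eqref{3.50}. Finally, for part $(ii)$ I would run the identical one-variable analysis on the corresponding expression for $\wti A_+$ (derived from Corollary \ref{c3.3}\,$(ii)$ exactly as in the proof of Theorem \ref{t3.4}), in which $w_1$ is now nondecreasing so that the roles of $w_1(a)$ and $w_1(b)$ are interchanged, yielding \eqref{3.51}, \eqref{3.52}; alternatively, when $b < \infty$ one may simply invoke the reflection $x \mapsto a + (b-x)$ of the proof of Theorem \ref{t2.2}\,$(ii)$. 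The one point requiring care is the identification of the supremum over $c$ with the interior maximum of $g$, which rests on continuity and monotonicity of $w_1$ and not on any pointwise differentiability of $w_1$ at $t^*$.
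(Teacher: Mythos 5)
Your proof is correct and follows essentially the same route as the paper's: both reduce $\wti A_-$ to a one-variable maximization whose unique interior critical point is $w_1(c) = w_1(a)^{1/p} w_1(b)^{1/p'}$, justify it as the global maximum because the function vanishes at the two endpoints, evaluate there to obtain \eqref{3.49}, and handle $p=1$ by the computation already recorded in \eqref{3.48}. Your substitution $t = w_1(c)$ is a slightly cleaner packaging than the paper's differentiation of $\eta(c)$ — it avoids differentiating through $w_1$ itself, as you note — but it is the same critical-point argument, not a different method.
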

\begin{proof}
Again, we prove item $(i)$ only. Starting with the case $p \in (1,\infty)$, we first prove \eqref{3.50} directly (even though that is not necessary). Suppose that $w_1(b) = 0$, then
\begin{align}
\wti A_- &= \sup_{c \in (a,b)} w_1(c)^{p'/(p p')} 
\Bigg[\f{w_1(c)^{1-p'} - w_1(a)^{1-p'}}{p' - 1}\Bigg]^{1/p'}  \no \\
&= (p' - 1)^{-1/p'} \sup_{c \in (a,b)} \Bigg[1 - \f{w_1(c)^{p'-1}}{w_1(a)^{p'-1}}\Bigg]^{1/p'}    \no \\
& = (p' - 1)^{-1/p'} = (p/p')^{1/p'},    \lb{3.53} 
\end{align}
as the supremum is attained for $c =b$. Similarly, if $1/w_1(a) = 0$, then
\begin{align}
\wti A_- &= \sup_{c \in (a,b)} [w_1(c) - w_1(b)]^{1/p} \Bigg[\f{w_1(c)^{1-p'}}{p' - 1}\Bigg]^{1/p'}   \no \\
&= (p' - 1)^{-1/p'} \sup_{c \in (a,b)} \bigg[1 - \f{w_1(b)}{w_1(c)}\bigg]^{1/p}    \no \\
& = (p' - 1)^{-1/p'} = (p/p')^{1/p'},     \lb{3.54}
\end{align}
as the supremum is attained for $c =a$. To deal with the general case \eqref{3.49} (which of course, directly yields \eqref{3.53}, \eqref{3.54}) one can proceed as follows.
\begin{align}
\wti A_- &= \sup_{c \in (a,b)} \Bigg\{[w_1(c) - w_1(b)]^{1/p} 
\Bigg[\f{w_1(c)^{1-p'} - w_1(a)^{1-p'}}{p' - 1}\Bigg]^{1/p'}\Bigg\}   \no \\
&= (p' - 1)^{-1/p'} \sup_{c \in (a,b)} \Bigg\{\bigg[1 - \f{w_1(b)}{w_1(c)}\bigg]^{1/p} 
\bigg[1 - \f{w_1(c)^{p'-1}}{w_1(a)^{p'-1}}\bigg]^{1/p'}\Bigg\}.    \lb{3.55}
\end{align}
To maximize the right-hand side of \eqref{3.55}, we introduce the absolutely continuous function
\begin{equation}
\eta(c) := \bigg[1 - \f{w_1(b)}{w_1(c)}\bigg]^{1/p} 
\bigg[1 - \f{w_1(c)^{p'-1}}{w_1(a)^{p'-1}}\bigg]^{1/p'}, \quad c \in (a,b), 
\end{equation} 
and note that $\eta'(c) = 0$ is equivalent to
\begin{equation}
w_1(c) = w_1(a)^{1/p} w_1(b)^{1/p'}.    \lb{3.57}
\end{equation}
Relation \eqref{3.57} yields a maximum of $\eta$ on $(a,b)$ ($c \in \{a,b\}$ being excluded as a maximum since 
$\eta(a) = \eta(b) =0$ if $p \in (1,\infty)$) and insertion of \eqref{3.57} into the right-hand side of \eqref{3.55} then yields \eqref{3.49} for $p \in (1,\infty)$. 

The case $p=1$ (and hence, $p' = \infty$) follows from
\begin{align}
\wti A_- &= \sup_{c \in (a,b)} \big\{[w_1(c) - w_1(b)] \|1/w_1\|_{L^{\infty}((a,c);dx)}\big\}    \no \\
&= \sup_{c \in (a,b)} \big\{[w_1(c) - w_1(b)]/w_1(c)\big\}   \no \\ 
&= \bigg[1 - \f{w_1(b)}{w_1(a)}\bigg],
\end{align}
as the supremum is attained at $c=a$. 
\end{proof}

\begin{remark} \lb{r3.6}
$(i)$ One observes that the first lines on the right-hand sides of \eqref{3.49}--\eqref{3.52} indeed converge 
to the second lines on the right-hand sides of \eqref{3.49}--\eqref{3.52} as $p \downarrow 1$ and 
$p' \uparrow \infty$. \\[1mm]
$(ii)$ If $w_1(b) \neq 0$ and $1/w_1(a) \neq 0$ (resp., if $w_1(a) \neq 0$ and $1/w_1(b) \neq 0$), then \eqref{3.49} (resp., \eqref{3.51}) proves in conjunction with \eqref{3.34} (resp., \eqref{3.37}) that inequality \eqref{3.40} (resp., \eqref{3.42}), and hence our ad hoc inequality \eqref{2.5} (resp., \eqref{2.6}) is not optimal, that is, the constant $p^{-p}$ in \eqref{3.40} and \eqref{3.42} is not optimal.
\end{remark}

\section{Some Applications to the Operator-Valued Case} \lb{s4}

The principal purpose of this section is to extend Example \ref{e2.3} to the operator-valued situation.

We start with a few preparations. Given a separable, complex Hilbert space $\cH$, we recall that we denote by 
$\cB(\cH)$ the $C^*$-algebra of linear, bounded operators $T \colon \cH \to \cH$ defined on all of $\cH$. 
Similarly, $\cB_p(\cH)$ denote the $\ell^p$-based Schatten--von Neumann trace ideals, $p \in [1,\infty)$. 

The eigenvalues of a bounded linear operator $B \in \cB(\cH)$ are abbreviated by $\lambda_j(B)$, 
$j \in \cJ$, with $\cJ \subseteq \bbN$ an appropriate index set, and the trace of a trace class operator $A \in \cB_1(\cH)$ is denoted by ${\tr}_{\cH} (A)$ and computed via Lidskii's theorem as
\begin{equation}
{\tr}_{\cH} (A) = \sum_{j \in \cJ} \lambda_j(A).     \lb{4.1} 
\end{equation} 
In particular, if $T \in \cB_p(\cH)$ for some $p \in [1,\infty)$, and $|T|$ is defined by $|T| := 
(T^* T)^{1/2}$, one recalls the fact,
\begin{align}
\|T\|_{\cB_p(\cH)}^p = {\tr}_{\cH} \big(|T|^p\big).   \lb{4.2} 
\end{align}
Moreover, if $A : (0,\infty) \to \cB(\cH)$ is weakly measurable, 
$0 \leq A(\dott) \in \cB_1(\cH)$~a.e.~on $(0, \infty)$, and 
${\tr}_{\cH}(A(\dott)) \in L^1((a,b); dt)$, then by an application of the monotone convergence theorem,
\begin{align}
& \bigg\|\int_a^b dt \, A(t)\bigg\|_{\cB_1(\cH)} = {\tr}_{\cH} \bigg(\int_a^b dt \, A(t)\bigg)    \no \\
& \quad = \sum_{n \in \cN} \int_a^b dt \, (e_n, A(t) e_n)_{\cH} 
= \int_a^b dt \, \sum_{n \in \cN} (e_n, A(t) e_n)_{\cH}    \lb{4.3} \\
& \quad = \int_a^b dt \, {\tr}_{\cH} (A(t)) = \int_a^b dt \, \|A(t)\|_{\cB_1(\cH)},     \no 
\end{align} 
where $\{e_n\}_{n \in \cN}$ represents a complete orthonormal system in $\cH$, with $\cN \subseteq \bbN$ an appropriate index set. In this context we also recall the well-known fact,
\begin{equation}
\bigg\|\int_a^b dt \, A(t)\bigg\|_{\cB_p(\cH)} 
\leq \int_a^b dt \, \|A(t)\|_{\cB_p(\cH)}, \quad 
p \in [1,\infty),    \lb{4.4}
\end{equation}
and similarly with $\cB_p(\cH)$ replaced by $\cB(\cH)$. 

Given these preparations, one can restate Example \ref{e2.3} in the case where $\cB = \cB_p(\cH)$ as follows.

\begin{corollary} \lb{c4.1}
Let $p \in [1,\infty)$, $b \in(0, \infty) \cup \{\infty\}$, and suppose that 
$F : (0,b) \to \cB(\cH)$ is a weakly measurable map satisfying 
$\|F(\dott)\|_{\cB_p(\cH)} \in L^p((0,b); x^{\alpha} dx)$, with $\alpha \in \bbR$ chosen according to items $(i)$ and $(ii)$ below: \\[1mm]  
$(i)$ If $\alpha < p-1$, then \eqref{2.13} implies
\begin{equation}
{\tr}_{\cH}\bigg(\int_0^b dx \, \, x^{\alpha} \, |F(x)|^p\bigg) \geq 
\bigg(\f{|\alpha - p + 1|}{p}\bigg)^p {\tr}_{\cH}\bigg(\int_0^b dx \, x^{\alpha -p} 
\bigg|\int_0^x dx' \, F(x')\bigg|^p\bigg).    \lb{4.5} 
\end{equation} 
$(ii)$  If $\alpha > p - 1$, then \eqref{2.14} implies
\begin{equation}
{\tr}_{\cH}\bigg(\int_0^b dx \, \, x^{\alpha} \, |F(x)|^p\bigg) \geq 
\bigg(\f{|\alpha - p + 1|}{p}\bigg)^p {\tr}_{\cH}\bigg(\int_0^b dx \, x^{\alpha -p} 
\bigg|\int_x^b dx' \, F(x')\bigg|^p\bigg).    \lb{4.6} 
\end{equation} 
In both cases $(i)$ and $(ii)$, the constant $[(|\alpha - p + 1|)/p]^p$ is best possible and equality holds if and only 
if $F = 0$~a.e.~on $(0,b)$. 
\end{corollary}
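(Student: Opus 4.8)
The plan is to reduce the operator-valued (trace) inequality in Corollary \ref{c4.1} to the already-established scalar inequalities \eqref{2.13} and \eqref{2.14} by applying the trace and pushing it inside the integral. I would treat item $(i)$ only, as item $(ii)$ is entirely analogous (or follows by the endpoint-reflection $x \mapsto b - x$ when $b < \infty$). The key structural idea is that $\tr_{\cH}(|T|^p) = \|T\|_{\cB_p(\cH)}^p$ by \eqref{4.2}, so the left-hand side of \eqref{4.5} equals $\int_0^b dx \, x^{\alpha} \|F(x)\|_{\cB_p(\cH)}^p$ after interchanging the trace with the (Bochner) integral, which is justified by \eqref{4.3}.

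The main work is to control the right-hand side. First I would observe that the inner integral $\int_0^x dx' \, F(x')$ is a $\cB(\cH)$-valued Bochner integral, and the triangle inequality for the $\cB_p(\cH)$-norm under integration, recorded in \eqref{4.4}, gives
\begin{equation}
\bigg\|\int_0^x dx' \, F(x')\bigg\|_{\cB_p(\cH)} \leq \int_0^x dx' \, \|F(x')\|_{\cB_p(\cH)}, \quad x \in (0,b). \notag
\end{equation}
Raising this to the $p$-th power (valid since both sides are nonnegative and $t \mapsto t^p$ is monotone) and using \eqref{4.2} once more in the form $\tr_{\cH}(|T|^p) = \|T\|_{\cB_p(\cH)}^p$, I get after interchanging trace and integral,
\begin{equation}
\tr_{\cH}\bigg(\int_0^b dx \, x^{\alpha - p} \bigg|\int_0^x dx' \, F(x')\bigg|^p\bigg)
\leq \int_0^b dx \, x^{\alpha - p} \bigg(\int_0^x dx' \, \|F(x')\|_{\cB_p(\cH)}\bigg)^p. \notag
\end{equation}
At this stage the problem is reduced to the scalar inequality \eqref{2.13} applied to the nonnegative scalar function $x \mapsto \|F(x)\|_{\cB_p(\cH)}$, which by hypothesis lies in $L^p((0,b); x^{\alpha} dx)$, with $\cB = \cB_p(\cH)$ (so that $\|F(\cdot)\|_{\cB} = \|F(\cdot)\|_{\cB_p(\cH)}$). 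Chaining the two displays with \eqref{2.13} yields \eqref{4.5}.

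The subtle point, and the step I expect to require the most care, is the \emph{direction} of the inequality in \eqref{4.4}: it bounds the norm of an integral by the integral of the norm, and I am inserting it on the side that \emph{increases} the right-hand side of \eqref{4.5}, which is exactly what is needed to preserve the overall $\geq$. One must check that the scalar inequality \eqref{2.13} goes the same way, so that composing a lower bound (from \eqref{2.13}) with an upper bound (from \eqref{4.4}) still produces a valid lower bound for the trace expression on the right of \eqref{4.5}; this is consistent, since \eqref{2.13} lower-bounds the left-hand (scalar) side by the scalar right-hand side, and the trace right-hand side is in turn bounded above by that scalar right-hand side. Finally, the optimality of the constant $[(|\alpha-p+1|)/p]^p$ and the equality characterization transfer immediately from the scalar case in Example \ref{e2.3}: the rank-one choice $F(\cdot) = f(\cdot)\, (e_0, \cdot)_{\cH} e_0$ for a fixed unit vector $e_0$ and a scalar near-optimizer $f$ reduces both sides exactly to the scalar inequality \eqref{2.13}, so no constant smaller than the scalar one can work, and equality forces $\|F(\cdot)\|_{\cB_p(\cH)} = 0$ a.e., i.e., $F = 0$ a.e.\ on $(0,b)$.
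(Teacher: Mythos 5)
Your proposal is correct and follows essentially the same route as the paper's proof: both reduce \eqref{4.5} to the scalar inequality \eqref{2.13} applied to $\|F(\cdot)\|_{\cB_p(\cH)}$ by interchanging trace and integral via \eqref{4.3}, converting traces of $|\cdot|^p$ to $\cB_p(\cH)$-norms via \eqref{4.2}, and invoking the triangle inequality \eqref{4.4} on the inner integral, with the inequality directions chaining exactly as you describe. The paper likewise defers optimality and the equality characterization to Example \ref{e2.3}; your explicit rank-one reduction is a harmless elaboration of that same step.
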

\begin{proof}
It suffices to consider item $(i)$. Then an application of \eqref{4.1}--\eqref{4.4} yields 
\begin{align}
& \bigg(\f{|\alpha - p + 1|}{p}\bigg)^p {\tr}_{\cH}\bigg(\int_0^b dx \, x^{\alpha -p} 
\bigg|\int_0^x dx' \, F(x')\bigg|^p\bigg)     \no \\
& \quad = \bigg(\f{|\alpha - p + 1|}{p}\bigg)^p \int_0^b dx \, x^{\alpha -p} 
{\tr}_{\cH}\bigg(\bigg|\int_0^x dx' \, F(x')\bigg|^p\bigg)   \quad (\text{by \eqref{4.3}})  \no \\
& \quad = \bigg(\f{|\alpha - p + 1|}{p}\bigg)^p \int_0^b dx \, x^{\alpha - p} 
\bigg\|\int_0^x dx' \, F(x') \bigg\|_{\cB_p(\cH)}^p    \quad (\text{by \eqref{4.2}}) \no \\
& \quad \leq \bigg(\f{|\alpha - p + 1|}{p}\bigg)^p 
\int_0^b dx \, x^{\alpha - p} \bigg(\int_0^x dx' \, \|F(x')\|_{\cB_p(\cH)}\bigg)^p   \quad (\text{by \eqref{4.4}}) \no \\
& \quad \leq \int_0^b dx \, x^{\alpha} \|F(x)\|_{\cB_p(\cH)}^p    \quad (\text{by \eqref{2.13}}) \no \\
& \quad = \int_0^b dx \, x^{\alpha} {\tr}_{\cH}\big(|F(x)|^p\big)    \quad (\text{by \eqref{4.2}}) \no \\
& \quad = {\tr}_{\cH}\bigg(\int_0^b dx \, x^{\alpha} |F(x)|^p\bigg)   \quad (\text{by \eqref{4.3}}).   \lb{4.7}
\end{align}
The final part about optimality of the constant on the right-hand side in \eqref{4.5} and \eqref{4.6}, and the equality part, then follow as in Example \ref{e2.3}.
\end{proof}

We note that the case $\alpha = 0$, $b = \infty$, $F(\dott) \geq 0$~a.e.~on 
$(0,\infty)$ in \eqref{4.5} was proved by 
Hansen \cite[Theorem~2.4]{Ha09} on the basis of a convexity argument (see also \cite{HKPP10}, \cite{Ki18}). Our strategy of proof is different and based on that in Theorem \ref{t2.2}. 

Next, following Hansen \cite{Ha09}, we will remove the trace in Corollary \ref{c4.1} in the case where $p \in [1,2]$. 

We start by recalling \cite[Lemma~2.1]{Ha09}: 

\begin{lemma} \lb{l4.2} 
Let $p \in [1,2]$, and suppose that $F : (0,\infty) \to \cB(\cH)$ is a weakly measurable map satisfying 
$F(\dott) \geq 0$~a.e.~on $(0,\infty)$, and $\int_0^{\infty} dx \, x^{-1} F(x)^p \in \cB(\cH)$. Then, 
\begin{align}
\int_0^{\infty} dx \, x^{-1} F(x)^p \geq \int_0^{\infty} dx \, x^{-1 - p} \bigg(\int_0^x dx' \, F(x')\bigg)^p.  \lb{4.8} 
\end{align}
The constant $1$ on the right-hand side of the inequality \eqref{4.8} is best possible. 
\end{lemma}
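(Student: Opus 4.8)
The plan is to reduce the operator inequality \eqref{4.8} to a pointwise-in-$x$ operator Jensen inequality, which is then integrated and rearranged by Fubini's theorem. First I would set $G(x) := x^{-1}\int_0^x dx'\,F(x')$, a Bochner integral that is well defined for a.e.\ $x$ since $0 \le F(\dott)$ is locally integrable. Because $x>0$ is a scalar and $G(x)\ge 0$, one has $\big(\int_0^x dx'\,F(x')\big)^p = (x\,G(x))^p = x^p\,G(x)^p$, so the right-hand side of \eqref{4.8} equals $\int_0^\infty dx\,x^{-1}\,G(x)^p$. Hence \eqref{4.8} is equivalent to the statement that Hardy averaging does not increase the weighted quantity $\int_0^\infty dx\,x^{-1}(\dott)^p$.

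The crucial input is operator convexity: for $p\in[1,2]$ the map $t\mapsto t^p$ is operator convex on $[0,\infty)$. Since $dx'/x$ is a probability measure on $(0,x)$, averaging $A(\dott)\mapsto x^{-1}\int_0^x dx'\,A(x')$ is a unital positive linear map, and the integral form of the operator Jensen inequality (Davis--Choi--Hansen--Pedersen) furnishes the pointwise bound
\begin{equation*}
G(x)^p = \bigg(x^{-1}\int_0^x dx'\,F(x')\bigg)^p \le x^{-1}\int_0^x dx'\,F(x')^p \quad \text{for a.e.\ } x\in(0,\infty).
\end{equation*}

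Integrating this operator inequality against $x^{-1}\,dx$ (integration of Bochner integrals preserves the order $\le$, as one checks by pairing with vectors) and then swapping the order of integration via Fubini--Tonelli gives
\[
\int_0^\infty dx\,x^{-1}\,G(x)^p \le \int_0^\infty dx\,x^{-2}\int_0^x dx'\,F(x')^p = \int_0^\infty dx'\,(x')^{-1}F(x')^p,
\]
which is exactly \eqref{4.8}; here $\int_{x'}^\infty dx\,x^{-2} = (x')^{-1}$ produces the clean constant $1$. Optimality of this constant then follows from the scalar specialization $\cH=\bbC$, in which \eqref{4.8} reduces to Example \ref{e2.3}\,$(i)$ with $\alpha=-1$ and $b=\infty$, where the constant $(|\alpha-p+1|/p)^p = 1$ is already known to be best possible.

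The main obstacle I expect lies in the measurability and integrability bookkeeping needed to make the above rigorous rather than in any deep inequality. One must confirm that $x\mapsto F(x)^p$ is weakly measurable (functional calculus applied to $0\le F(\dott)$) and locally Bochner integrable; that the averaging map genuinely qualifies as a unital positive map to which the operator Jensen inequality applies, e.g.\ by realizing it as a state on $L^\infty((0,x);\cB(\cH))$ or by approximating the Bochner integral by finite Riemann sums and invoking the finite operator Jensen inequality before passing to the limit; and that the hypothesis $\int_0^\infty dx\,x^{-1}F(x)^p \in \cB(\cH)$ is precisely what guarantees convergence in $\cB(\cH)$ of all the improper operator integrals and legitimizes the order-preservation under integration.
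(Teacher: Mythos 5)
Your proof is correct, but note that the paper itself never proves Lemma \ref{l4.2}: it is quoted verbatim from Hansen \cite[Lemma~2.1]{Ha09}, and your argument---operator convexity of $t \mapsto t^p$ for $p \in [1,2]$, the Choi--Davis--Hansen--Pedersen Jensen inequality applied to the unital positive averaging map $A \mapsto x^{-1}\int_0^x dx'\, A(x')$, followed by Fubini--Tonelli using $\int_{x'}^{\infty} dx\, x^{-2} = (x')^{-1}$, with optimality read off from the scalar case $\alpha = -1$ of Example \ref{e2.3}\,$(i)$---is precisely the convexity argument of that cited source, which the paper explicitly describes as being ``on the basis of a convexity argument.'' So your proposal is correct and takes essentially the same route as the proof the paper relies on by reference.
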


Employing Lemma \ref{l4.2} we can prove the principal result of this section.

\begin{theorem} \lb{t4.3}
Let $p \in [1,2]$, and suppose that $F : (0,\infty) \to \cB(\cH)$ is a weakly measurable map satisfying 
$F(\dott) \geq 0$~a.e.~on $(0,\infty)$, and 
$\int_0^{\infty} dx \, x^{\alpha} F(x)^p \in \cB(\cH)$, with $\alpha \in \bbR$ chosen according to items $(i)$ and $(ii)$ below: \\[1mm] 
$(i)$ If $\alpha < p-1$, then 
\begin{equation}
\int_0^{\infty} dx \, \, x^{\alpha} \, F(x)^p \geq 
\bigg(\f{|\alpha - p + 1|}{p}\bigg)^p \int_0^{\infty} dx \, x^{\alpha -p} 
\bigg(\int_0^x dx' \, F(x')\bigg)^p.    \lb{4.9} 
\end{equation} 
$(ii)$  If $\alpha > p - 1$, then 
\begin{equation}
\int_0^{\infty} dx \, \, x^{\alpha} \, F(x)^p \geq 
\bigg(\f{|\alpha - p + 1|}{p}\bigg)^p \int_0^{\infty} dx \, x^{\alpha -p} 
\bigg(\int_x^{\infty}  dx' \, F(x')\bigg)^p.    \lb{4.10} 
\end{equation} 
In both cases $(i)$ and $(ii)$, the constant $[(|\alpha + 1 - p|)/p]^p$ is best possible and equality holds if and only 
if $F = 0$~a.e.~on $(0,b)$. 
\end{theorem}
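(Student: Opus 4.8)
The plan is to reduce the general power weight $x^\alpha$ to the special case $\alpha = -1$ handled in Lemma~\ref{l4.2} via a suitable change of variables and operator rescaling, exploiting the fact that $F(\dott) \geq 0$ and $p \in [1,2]$ so that operator monotonicity is available. First I would treat item $(i)$ with $\alpha < p-1$. Introduce the substitution $x = t^\beta$ for an exponent $\beta > 0$ to be chosen, together with a rescaled map $G(t) := \beta^{1/p} t^{(\beta-1)/p} F(t^\beta)$, designed so that the left-hand side $\int_0^\infty dx \, x^\alpha F(x)^p$ transforms into $\int_0^\infty dt \, t^{-1} G(t)^p$ after collecting the Jacobian factor $dx = \beta t^{\beta-1} dt$. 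Matching the power of $t$ forces a linear relation between $\alpha$, $\beta$, and $p$; solving it should yield $\beta = p/(p-1-\alpha)$, which is positive precisely because $\alpha < p-1$, and this is the structural reason the constant comes out as $\big(|\alpha-p+1|/p\big)^p = \beta^{-p}$.

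The key steps, in order, are as follows. Apply Lemma~\ref{l4.2} to $G$ to obtain
\begin{equation}
\int_0^\infty dt \, t^{-1} G(t)^p \geq \int_0^\infty dt \, t^{-1-p} \bigg(\int_0^t dt' \, G(t')\bigg)^p. \no
\end{equation}
Then I would transform both sides back to the $x$-variable. The left side returns $\int_0^\infty dx \, x^\alpha F(x)^p$ by the definition of $G$. For the right side the inner integral $\int_0^t dt' \, G(t')$ must be rewritten, again by the substitution $t' = (x')^{1/\beta}$, as a constant multiple of $\int_0^x dx' \, F(x')$; here one uses that $F(\dott)\geq 0$, so the operator-valued integrals are monotone and the elementary one-variable computation of the scalar weight factors carries through verbatim at the operator level. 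Collecting the resulting powers of $x$ and the numerical constant $\beta^{-p}$ then produces exactly \eqref{4.9}. Item $(ii)$ with $\alpha > p-1$ is handled by the reflection-type argument, substituting $x \mapsto 1/x$ (or reducing to the companion form of Lemma~\ref{l4.2}), which interchanges the roles of $\int_0^x$ and $\int_x^\infty$ and flips the sign condition on $\alpha - p + 1$.

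The main obstacle I anticipate is verifying that the change of variables is legitimate at the \emph{operator} level rather than merely scalar. The substitution and the factor $\beta^{1/p} t^{(\beta-1)/p}$ are positive scalars, so $G(t) = \big(\beta^{1/p} t^{(\beta-1)/p}\big) F(t^\beta) \geq 0$ inherits positivity and $G(t)^p$ equals the scalar raised to the $p$ times $F(t^\beta)^p$ without any ordering subtleties, since scalars commute with everything; this is where $p \in [1,2]$ matters only insofar as it is the hypothesis of Lemma~\ref{l4.2}, and no operator-convexity of $x \mapsto x^p$ is needed beyond what Lemma~\ref{l4.2} already encapsulates. One must also confirm the integrability transfer, namely that $\int_0^\infty dx \, x^\alpha F(x)^p \in \cB(\cH)$ translates into $\int_0^\infty dt \, t^{-1} G(t)^p \in \cB(\cH)$, so that Lemma~\ref{l4.2} genuinely applies. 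Finally, the optimality of the constant and the equality-characterization follow from the corresponding statements in Lemma~\ref{l4.2} transported back through the (bijective, measure-class-preserving) change of variables, so that equality forces $G = 0$ and hence $F = 0$ a.e.
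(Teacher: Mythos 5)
Your strategy coincides with the paper's: reduce to Hansen's inequality (Lemma \ref{l4.2}) by a power substitution with a rescaled positive operator-valued map, and deduce item $(ii)$ from item $(i)$ by the inversion $x \mapsto 1/x$. The choice $\beta = p/(p-1-\alpha)$, the identification of the constant as $\beta^{-p} = (|\alpha-p+1|/p)^p$, and the remark that only positive scalar factors enter the rescaling (so no operator-ordering issues arise) are all exactly as in the paper's proof.

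However, your rescaled map is wrong as written, and with it both halves of the key step fail. Substituting $x = t^\beta$, $dx = \beta t^{\beta-1}dt$, gives
\begin{equation*}
x^{\alpha}F(x)^p\,dx = t^{-1}\Big[\beta^{1/p}\,t^{\beta(1+\alpha)/p}\,F\big(t^{\beta}\big)\Big]^p\,dt,
\end{equation*}
so matching the left-hand side forces the exponent $\beta(1+\alpha)/p$ on $t$, not $(\beta-1)/p$. Separately, for a map of the form $G(t) = c\,t^{\gamma}F(t^{\beta})$ the inner integral transforms as
\begin{equation*}
\int_0^t dt'\,(t')^{\gamma}F\big((t')^{\beta}\big) = \beta^{-1}\int_0^{t^{\beta}} dy\, y^{(\gamma+1-\beta)/\beta}\,F(y),
\end{equation*}
which is a constant multiple of $\int_0^{t^{\beta}}dy\,F(y)$ only if $\gamma = \beta - 1$; with your $\gamma = (\beta-1)/p$ a residual weight $y^{(\beta-1)(1-p)/(p\beta)}$ survives inside (nontrivial unless $p=1$ or $\alpha=-1$), so the right-hand side never becomes $\int_0^x dx'\,F(x')$. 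The two requirements $\gamma = \beta(1+\alpha)/p$ and $\gamma = \beta-1$ are compatible precisely when $\beta = p/(p-1-\alpha)$ --- it is this pair of conditions, not matching the left-hand side alone, that determines $\beta$ --- and then the correct map is $G(t) = \beta^{1/p}t^{\beta-1}F(t^{\beta})$ (the paper takes $G(t)=t^{\beta-1}F(t^{\beta})$, which merely moves constants to the other side). With this repaired, your outline, including the inversion $y=1/x$, $G(y)=F(1/y)y^{-2}$, $\beta = 2p-2-\alpha$ for item $(ii)$, reproduces the paper's argument. One further correction: Lemma \ref{l4.2} asserts only that the constant $1$ is best possible; it contains no equality characterization, so that part cannot be ``transported'' from it as you claim. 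The paper instead obtains optimality and the equality case by taking traces and reducing to the scalar inequality of Example \ref{e2.3}, via Corollary \ref{c4.1}.
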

\begin{proof} We start by proving item $(i)$. 
Closely following the strategy of proof in \cite[Theorem~2.3]{Ha09}, we introduce
\begin{equation}
G(x) = F\big(x^{p/|\alpha - p + 1|}\big) x^{(1+\alpha)/|\alpha - p + 1|}, \quad x > 0,
\end{equation}
and the change of variables
\begin{equation}
y = x^{p/|\alpha - p + 1|}, \quad 
dy = [p/|\alpha - p + 1|] x^{(1 + \alpha)/|\alpha - p + 1|} dx. 
\end{equation}
Then Lemma \ref{l4.2} applied to $G$ yields
\begin{align}
\int_0^{\infty} dx \, x^{-1} G(x)^p 
&= \int_0^{\infty} dx \, x^{-1} F\big(x^{p/|\alpha - p + 1|}\big)^p 
x^{p(1+\alpha)/|\alpha - p + 1|}
\no \\
& \geq \int_0^{\infty} dx \, x^{-1-p} \bigg(\int_0^x dx' G(x')\bigg)^p  
\quad (\text{by \eqref{4.8}}) \no \\
& = \int_0^{\infty} dx \, x^{-1-p} \bigg(\int_0^x dt \, 
F\big((t^{p/|\alpha - p + 1|}\big) t^{(1+\alpha)/|\alpha - p + 1|}\bigg)^p     \no \\
& = \bigg(\f{|\alpha - p + 1|}{p}\bigg)^p \int_0^{\infty} dx \, x^{-1-p} 
\bigg(\int_0^{x^{p/|\alpha - p + 1|}} dy \, F(y)\bigg)^p.    \lb{4.13} 
\end{align}
Introducing another change of variables
\begin{equation}
w = x^{p/|\alpha - p + 1|}, \quad dw \, w^{-1} = [p/|\alpha - p + 1|] dx \, x^{-1},      \lb{4.14} 
\end{equation}
then implies
\begin{align}
& \bigg(\f{|\alpha - p + 1|}{p}\bigg) \int_0^{\infty} dw \, w^{\alpha} F(w)^p     \no \\
&\quad = \bigg(\f{|\alpha - p + 1|}{p}\bigg) \int_0^{\infty} dw \, w^{-1} F(w)^p w^{1 + \alpha}     \no \\
& \quad = 
\int_0^{\infty} dx \, x^{-1} F\big(x^{p/|\alpha - p + 1|}\big)^p x^{p(1+\alpha)/|\alpha - p + 1|}     \no \\
& \quad \geq \bigg(\f{|\alpha - p + 1|}{p}\bigg)^{p} \int_0^{\infty} dx \, x^{-1-p} 
\bigg(\int_0^{x^{p/|\alpha - p + 1|}} dy \, F(y)\bigg)^p  \quad (\text{by \eqref{4.13}})     \no \\
& \quad = \bigg(\f{|\alpha - p + 1|}{p}\bigg)^{p+1} \int_0^{\infty} dw \, 
w^{-1 - (p-1-\alpha)} 
\bigg(\int_0^w dy \, F(y)\bigg)^p  \quad (\text{by \eqref{4.14}})\no \\
& \quad = \bigg(\f{|\alpha - p + 1|}{p}\bigg)^{p+1} \int_0^{\infty} dw \, 
w^{\alpha - p} \bigg(\int_0^w dy \, F(y)\bigg)^p, 
\end{align}
proving \eqref{4.9}. 

While $\Phi(F) = x^{-1} \int_0^x dx' \, F(x')$ represents a positive, unital map 
(i.e., $\Phi(F) \geq 0$ if $F \geq 0$ and $\Phi(I_{\cH}) = I_{\cH}$), $\int_x^{\infty} dx' \, F(x')$ cannot possibly 
be of this type and hence one cannot simply follow the proof of \cite[Theorem~2.3]{Ha09} to derive \eqref{4.10}. Fortunately, the following elementary alternative approach applies. Introducing the change of variables, 
\begin{equation}
y = 1/x, \quad G(y) = F(1/y) y^{-2}, 
\end{equation}   
in \eqref{4.9} (w.r.t.~$x$ on either side in \eqref{4.9} and, especially, w.r.t.~$x'$ on the right-hand side of 
\eqref{4.9}) results in
\begin{align}
\int_0^{\infty} dy \, y^{\beta} G(y)^p \geq \bigg(\f{|\beta - p + 1|}{p}\bigg)^p \int_0^{\infty} dx \, x^{\beta - p} 
\bigg(\int_x^{\infty} dy \, G(y)\bigg)^p,
\end{align}
where
$\beta = 2p -2 - \alpha$, and hence $\alpha < p-1$ is equivalent to $\beta > p-1$.  

The final part about optimality of the constant on the right-hand side in \eqref{4.9} and \eqref{4.10}, and the equality part, then follow as in Corollary \ref{c4.1} from Example \ref{e2.3} upon taking the trace on either 
side of \eqref{4.9} and \eqref{4.10}.
\end{proof}

Again we note that the case $\alpha = 0$ in \eqref{4.9} was proved by Hansen \cite[Theorem~2.3]{Ha09}; 
he also proved that Theorem \ref{t4.3} does not extend to $p > 2$. 

While we focused on the underlying interval $(0,\infty)$ in Theorem \ref{t4.3}, the analogous case $(0,b)$, 
$b \in (0,\infty)$ follows upon employing the variable transformations discussed in \cite[p.~36--38]{KPS17}.  

Extending the definition of $(H_{\mp,\ell} F)(x)$, $x \in (0,\infty)$, $\ell \in \bbN$, in \eqref{2.15}, \eqref{2.16} to the operator-valued context where $F : (0,\infty) \to \cB(\cH)$ is a weakly measurable map satisfying 
$F(\dott) \geq 0$~a.e.~on $(0,\infty)$, and for all $c \in (0,\infty)$, 
$\int_0^c dx \, F(x)^p \in \cB(\cH)$ in connection with $H_{-,\ell}$ and 
$\int_c^{\infty} dx \, F(x)^p \in \cB(\cH)$ in connection with $H_{+,\ell}$, the facts \eqref{4.9}, \eqref{4.10} can 
be rewritten as 
\begin{align}
\begin{split} 
& \int_0^{\infty} dx \, x^{\gamma - p} [H_{\mp,1} (F(\dott))(x)]^p  \\
& \quad \leq \bigg(\f{p}{|\gamma - p +1|}\bigg)^p \int_0^{\infty} dx \, x^{\gamma} F(x)^p, 
\quad p \in [1, \infty), \; \gamma \lessgtr p-1.   \lb{4.18} 
\end{split} 
\end{align}
Thus one obtains the following result.

\begin{corollary} \lb{c4.4} 
Let $p \in [1,2]$, and suppose that $F : (0,\infty) \to \cB(\cH)$ is a weakly measurable map satisfying 
$F(\dott) \geq 0$~a.e.~on $(0,\infty)$, and 
$\int_0^{\infty} dx \, x^{\alpha} F(x)^p \in \cB(\cH)$, with $\alpha \in \bbR$ chosen according to \eqref{4.19} below. Then
\begin{align}
&  \int_0^{\infty} dx \, x^{\alpha} F(x)^p    \no \\
& \quad \geq \prod_{k=1}^{\ell} \bigg(\f{|\alpha - kp + 1|}{p}\bigg)^p \int_0^{\infty} dx \, x^{\alpha - \ell p} 
[H_{\mp,\ell} (F(\, \cdot \,))(x)]^p,   \lb{4.19} \\
& \hspace*{5.35cm}  \alpha \lessgtr \begin{cases} p - 1, \\  \ell p - 1, 
\end{cases} \ell \in \bbN.   \no 
\end{align} 
\end{corollary}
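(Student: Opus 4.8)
The plan is to derive \eqref{4.19} by induction on $\ell \in \bbN$, iterating the single-step operator-valued Hardy inequality \eqref{4.18} in exactly the manner the scalar estimate \eqref{2.17} was obtained from \eqref{2.16a}. Since \eqref{4.18} is merely a rearrangement of \eqref{4.9}, \eqref{4.10}, it rests on Theorem \ref{t4.3} and hence on the restriction $p \in [1,2]$, which is why the corollary is confined to that range. The base case $\ell = 1$ is precisely Theorem \ref{t4.3}: here the two quantities $p-1$ and $\ell p - 1$ coincide, so the hypothesis on $\alpha$ is just $\alpha \lessgtr p - 1$, and \eqref{4.18} with $\gamma = \alpha$, rearranged, is \eqref{4.19} for $\ell = 1$.

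For the inductive step I would assume \eqref{4.19} for $\ell - 1$ and set $G(\dott) = H_{\mp,\ell-1}(F(\dott))$, applying \eqref{4.18} with $\gamma = \alpha - (\ell - 1)p$. Three points must be checked. First, $G(\dott) \geq 0$~a.e.\ because it is an iterated integral of the nonnegative map $F$, so the operator power $G(\dott)^p$ and the hypotheses of Theorem \ref{t4.3} are meaningful. Second, the semigroup identity $H_{\mp,1}(H_{\mp,\ell-1}(F)) = H_{\mp,\ell}(F)$ from \eqref{2.15}, \eqref{2.16}, together with $\gamma - p = \alpha - \ell p$ and $|\gamma - p + 1| = |\alpha - \ell p + 1|$, turns \eqref{4.18} into $\int_0^{\infty} dx\, x^{\alpha - \ell p}[H_{\mp,\ell}(F)(x)]^p \leq (p/|\alpha - \ell p + 1|)^p \int_0^{\infty} dx\, x^{\alpha - (\ell-1)p}[H_{\mp,\ell-1}(F)(x)]^p$. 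Third, the integrability $\int_0^{\infty} dx\, x^{\gamma} G(x)^p \in \cB(\cH)$ required to invoke Theorem \ref{t4.3} follows from the inductive hypothesis, which bounds this positive operator above, in the operator order, by $\big(\prod_{k=1}^{\ell-1}(|\alpha - kp+1|/p)^p\big)^{-1}\int_0^{\infty} dx\, x^{\alpha} F(x)^p \in \cB(\cH)$, so it is itself bounded. Rearranging the displayed inequality and substituting it into the inductive hypothesis produces the factor $(|\alpha - \ell p + 1|/p)^p$, completing the product and closing the induction.

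The one step requiring genuine attention is the propagation of the constraint on $\alpha$ through the iteration. Applying \eqref{4.18} at the passage from $k-1$ to $k$ integrals requires $\gamma = \alpha - (k-1)p \lessgtr p-1$, i.e.\ $\alpha \lessgtr kp - 1$, for every $k \in \{1,\dots,\ell\}$, and the inductive hypothesis \eqref{4.19} at level $\ell-1$ carries its own constraint. For the $H_-$ branch (reading $\lessgtr$ as $<$ throughout) the binding condition is $k=1$, namely $\alpha < p-1$, and this automatically secures $\alpha < kp-1$ for all larger $k$ as well as the hypothesis of the $(\ell-1)$-step. For the $H_+$ branch (reading $\lessgtr$ as $>$) the binding condition is $k = \ell$, namely $\alpha > \ell p - 1$, which in turn implies $\alpha > kp - 1$ for all smaller $k$ and the weaker condition $\alpha > (\ell-1)p - 1$ needed to invoke \eqref{4.19} at level $\ell-1$. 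Thus the hypothesis recorded in \eqref{4.19}, asking $\alpha < p-1$ in the minus case and $\alpha > \ell p - 1$ in the plus case, is exactly the sharp condition keeping every intermediate application of \eqref{4.18} legitimate; I expect this bookkeeping, rather than any analytic difficulty, to be the main (and ultimately routine) obstacle.
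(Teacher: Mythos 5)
Your proposal is correct and takes essentially the same approach as the paper: the paper's entire proof is the single remark that it suffices to iterate \eqref{4.18}, applying it to appropriate $F = F_{\ell}$ exactly as \eqref{2.17} was derived from \eqref{2.16a}. Your induction—including the semigroup identity $H_{\mp,1}(H_{\mp,\ell-1}F) = H_{\mp,\ell}F$, the operator-order argument for the intermediate integrability, and the verification that $\alpha \lessgtr p-1$ (resp.\ $\alpha \lessgtr \ell p - 1$) is the binding constraint in the minus (resp.\ plus) branch—simply makes explicit the bookkeeping the paper leaves to the reader.
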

\begin{proof}
It suffices to iterate \eqref{4.18} by applying it to appropriate $F = F_{\ell}$ as in the derivation of \eqref{2.17}. 
\end{proof}

Replacing $F \geq 0$ by $|F| = (F^* F)^{1/2}$ and mimicking the differential version of the Hardy inequalities at the end of Section \ref {s2} yields 
\begin{align}
\begin{split} 
\int_0^{\infty} dx \, x^{\alpha} |f'(x)|^p \geq \bigg(\f{|\alpha - p +1|}{p}\bigg)^p  
\int_0^{\infty} dx \, x^{\alpha - p} |f(x)|^p,& \\
p \in [1,2], \;  \alpha \in \bbR, \; f \in C_0^{\infty}((0,\infty); \cB(\cH)).&    \lb{4.20}
 \end{split}
\end{align}
Iterating \eqref{4.20} then yields as in \eqref{2.24}
\begin{align}
\begin{split} 
\int_0^{\infty} dx \, x^{\alpha} |f^{(n)}(x)|^p \geq \f{\prod_{j=1}^k |\alpha- jp +1|^p}{p^{kp}}
\int_0^{\infty} dx \, x^{\alpha - kp} |f^{(n-k)}(x)|^p,& \\ 
p \in [1,2], \; 1 \leq k \leq n, \; n \in \bbN, \;  \alpha \in \bbR, \; f \in C_0^{\infty}((0,\infty); \cB(\cH)).&    \lb{4.21} 
\end{split} 
\end{align}

\medskip

\noindent {\bf Acknowledgments.} We are indebted to Paul Hagelstein for very helpful discussions. We 
also gratefully acknowledge a variety of helpful suggestions by the anonymous referee and thank him 
for a critical reading of our manuscript.


\end{document}